\theoremstyle{plain}
\newtheorem{thm}{Theorem}
\newtheorem{lm}[thm]{Lemma}
\theoremstyle{definition}
\newtheorem{re}[thm]{Remark}
\newtheorem{example}[thm]{Example}
\newcommand{\CC}{{\mathbb C}}
\newcommand{\NN}{{\mathbb N}}
\newcommand{\TT}{{\mathbb T}}
\newcommand{\im}{\operatorname{im}}
\newcommand{\diag}{\operatorname{diag}\nolimits}
\newcommand{\one}{{\mathbf 1}}
\newcommand{\cV}{\mathcal{M}}
\newcommand{\cSV}{\mathcal{SM}}
\newcommand{\cAV}{\mathcal{AM}}
\newcommand{\rk}{\operatorname{rk}}
\newcommand{\la}{\langle}
\newcommand{\ra}{\rangle}
\newcommand{\Wedge}{\bigwedge}
\newcommand{\Sp}{\mathrm{Sp}}
\newcommand{\Crit}{\mathrm{Crit}}
\newcommand{\dto}{\dashrightarrow}
\begin{document}

\title[ML-duality]{Maximum likelihood duality\\ for determinantal varieties}
\author[J.~Draisma]{Jan Draisma}
\address[Jan Draisma]{
Department of Mathematics and Computer Science\\
Technische Universiteit Eindhoven\\
P.O. Box 513, 5600 MB Eindhoven, The Netherlands\\
and Centrum voor Wiskunde en Informatica, Amsterdam,
The Netherlands}
\thanks{The first author is supported by a Vidi grant from
the Netherlands Organisation for Scientific Research (NWO)}
\email{j.draisma@tue.nl}

\author[J.~Rodriguez]{Jose Rodriguez}
\address[Jose Rodriguez]{
Department of Mathematics\\
University of California at Berkeley\\
970 Evans Hall 3840\\
Berkeley, CA 94720-3840 USA
}
\thanks{The second author is supported  by the US National Science Foundation DMS-0943745}
\email{jo.ro@berkeley.edu}

\date{October 2012}

\begin{abstract}
In a recent paper, Hauenstein, Sturmfels, and the second author discovered
a conjectural bijection between critical points of the likelihood function
on the complex variety of matrices of rank $r$ and critical points on the
complex variety of matrices of co-rank $r-1$. In this paper, we prove
that conjecture for rectangular matrices and for symmetric matrices,
as well as a variant for skew-symmetric matrices.
\end{abstract}

\maketitle

\section{Introduction and results}
For an $m \times n$-data table $U=(u_{ij}) \in \NN^{m \times n}$,
we define the {\em likelihood function} $\ell_U: \TT^{m \times n}
\to \TT$, where $\TT=\CC^*$ is the complex one-dimensional torus, as
$\ell_U(Y)=\prod_{ij} y_{ij}^{u_{ij}}$ for $Y=(y_{ij})_{ij} \in \TT^{m
\times n}$. This terminology is motivated by the following observation.
If $Y$ is a matrix with positive real entries adding up to $1$,
interpreted as the joint probability distribution of two random variables
taking values in $[m]:=\{1,\ldots,m\}$ and $[n]:=\{1,\ldots,n\}$,
respectively, then up to a multinomial coefficient depending only on $U$,
$\ell_U(Y)$ is the probability that when independently drawing $\sum_{i,j}
u_{ij}$ pairs from the distribution $Y$, the number of pairs equal to
$(i,j)$ is $u_{ij}$. In other words, $\ell_U(Y)$ is the likelihood of $Y$,
given observations recorded in the table $U$. A standard problem in
statistics is to {\em maximize} $\ell_U(Y)$.

Without further constraints on $Y$ this maximization problem 
is easy: it is uniquely solved by the
matrix $Y$ obtained by scaling $U$ to lie in said probability simplex. But
various meaningful statistical models require $Y$ to lie in
some {\em subvariety} $X$ of $\TT^{m \times n}$. For instance, in
the model where the first and second random variable are required to be
independent, one takes $X$ equal to the intersection of the variety of
matrices of rank $1$ with the hyperplane $\sum_{ij} y_{ij}=1$ supporting
the probability simplex. Taking mixtures of this model, one is also led
to intersect said hyperplane with the variety of rank-$r$ matrices.

For general $X$, the maximum-likelihood estimate is typically
much harder to find (though in the independence model it is still
well-understood). One reason for this is that the restriction of $\ell_U$
to $X$ may have many critical points. Under suitable assumptions,
this number of critical points is finite and independent of $U$ (for
sufficiently general $U$), and is called the {\em maximum likelihood
degree} or {\em ML-degree} of $X$. Finiteness and independence of $U$
hold, for instance, for smooth closed subvarieties of tori.  In that
case, the ML-degree equals the signed topological Euler characteristic
of $X$; see \cite{Huh12} or the more general ``non-compact Riemann-Roch
formula'' in \cite{FK00}. Finiteness and independence of $U$ also hold
for all varieties $X$ studied in this paper \cite{Hauenstein12,Hosten05}
(which are smooth but not closed, and become closed but singular if one
takes the closure).

We take $X$ to be a smooth, irreducible, locally closed,
complex subvariety of a torus. Doing so, we tacitly
shift attention from the statistical motivation to complex
geometry---in particular, we no longer worry whether the critical
points counted by the ML-degree lie in the probability
simplex or are even real-valued matrices. 
Similarly, we also no longer restrict ourselves to study integer
valued tables but allow our data to vary over the complex numbers

The set of all critical points for varying data matrices $U$ has 
a beautiful
geometric interpretation: Given $P \in X$ and a
vector $V$ in the tangent space $T_P X$ to $X$ at $P$, the derivative of
$\ell_U$ at $P$ in the direction $V$ equals $\ell_U(P) \cdot \sum_{ij}
\frac{v_{ij}}{p_{ij}} u_{ij}$. This vanishes if and only if $U$
is perpendicular, in the standard symmetric bilinear form on $\CC^{m
\times n}=\CC^{mn}$, to the entry-wise quotient $\frac{V}{P}$ of $V$
by $P$. This leads us to define
\[ \Crit(X):=\left\{(P,U)\ \middle|\ \frac{T_P X}{P} \perp U \right\}
\subseteq X \times \CC^{m \times n}, \]
which is called {\em the variety of critical points} of $X$ in
\cite{Huh12}, except that there $U$ varies over projective space
and the closure is taken. By construction, $\Crit(X)$ is smooth and
irreducible, and has dimension $mn$; indeed, it is a vector
bundle over $X$ of rank $mn-\dim X$. The ML-degree of $X$ is well-defined
if and only if the projection $\Crit(X) \to \CC^{m \times n}$ is dominant,
in which case the degree of this rational map is the
ML-degree of $X$.

In this paper, motivated by \cite{Hauenstein12}, we consider three choices
for $X$, all given by rank constraints: First, in the {\em rectangular}
case, we order $m,n$ such that $m \leq n$, fix a rank $r \in
[m]$, and take $X$
equal to
\[ \cV_r:=\left\{P \in \TT^{m \times n}\ \middle|\ \sum_{ij} p_{ij}=1
\text{ and } \rk P = r\right\}. \]
Second, in the {\em symmetric} case, we take $m=n$ and take
$X$ equal to 
\[ \cSV_r:=\left\{
P=
\begin{bmatrix}
2p_{11} & p_{12} & \cdots & p_{1m}\\
p_{12} & 2p_{22} &        & \\
\vdots &  & \ddots & \\
p_{1m} &  &  & 2p_{mm}
\end{bmatrix} \in \TT^{m \times m}\
\middle|\ \begin{array}{c}  \sum_{i \leq j} p_{ij}=1\\ \text{
and } \rk(P)=r \end{array}
\right\}.
\]
Third, in the {\em skew-symmetric} or {\em alternating}
case, we take $m=n$ and, for {\em even} $r \in [m]$, take $X$ equal to
\[{\cAV}_{r}:=\left\{ P=\left[\begin{array}{cccc}
0 & p_{12} & \cdots & p_{1m}\\
-p_{12} & 0\\
\vdots &  & \ddots\\
-p_{1m} &  &  & 0
\end{array}\right]\in\mathbb{C}^{m\times m}\ \middle|\ \begin{array}{c}
\sum_{i<j}p_{ij}=1,\\ 
\rk\left(P\right)=r,\\
\text{and }\forall i<j:p_{ij}\neq0
\end{array}\right\} .\]
Minor modifications of the likelihood function are needed in the latter
two cases: we define as $\ell_U(P):=\prod_{i \leq j} p_{ij}^{u_{ij}}$
in the symmetric case, and as $\ell_U(P):=\prod_{i<j} p_{ij}^{u_{ij}}$
in the alternating case.

\begin{table}
\begin{tabular}{c|cccccccc}
        & $(m,n)$ = & (3,3) & (3,4) & (3,5) & (4,4) & (4,5)  & (4,6) & (5,5) \\
\hline
$r=1$ & &      1 &   1   &   1 & 1 &  1 &  1     & 1 \\
$r=2$ & &      10 & 26 & 58 &  191 &  843 &  3119 & 6776 \\
$r=3$ & &        1         &1 &1 & 191 &  843 &   3119 &  61326 \\
$r=4$ & &       & & &1 &1 &1 & 6776 \\
$r=5$ & &  & & & & & & 1 \\
\end{tabular}\\
\ \\
\ \\
\caption{ML-degrees of $\cV_r$ for small values of $r \leq m \leq n$}
\label{eq:MLvalues}
\end{table}

In \cite{Hauenstein12}, using the numerical algebraic geometry software
$\tt{bertini}$ \cite{BHSW06}, the ML-degree of $\cV_r$ is computed
for various values of $r,m,n$ with $r \leq m \leq n$. The numbers are
listed in Table \ref{eq:MLvalues}. Observe that the numbers for rank $r$
and rank $m-r+1$ coincide. The natural conjecture put forward in that
paper is that this always holds \cite[Conjecture 1.2]{Hauenstein12},
and that there is an explicit bijection between the two sets of critical
points \cite[Conjecture 4.2]{Hauenstein12}. Moreover, similar results
were conjectured for symmetric matrices. We will prove these conjectures,
for which we use the term {\em ML-duality} suggested to us by Sturmfels.

\begin{thm}[ML-duality for rectangular matrices and for symmetric matrices]

Fix a rank $r \in [m]$ and let $U \in \NN^{m \times n}$ with $m
\leq n$ ($m=n$ in the symmetric case) be a sufficiently general data
matrix (symmetric in the symmetric case). Then there is an explicit
involutive bijection between the critical points of $\ell_U$ on
$\cV_r$ (respectively, $\cSV_r$) and the critical points of $\ell_U$
on $\cV_{m-r+1}$ (respectively, $\cSV_{m-r+1}$). In particular, the
ML-degrees of $\cV_r$ and $\cV_{m-r+1}$ (respectively, $\cSV_r$
and $\cSV_{m-r+1}$) coincide. Moreover, the product $\ell_U(P)\ell_U(Q)$
is the same for all pairs consisting of a rank-$r$ critical point $P$
and the corresponding rank-$(m-r+1)$ point $Q$.
\end{thm}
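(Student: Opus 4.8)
The plan is to encode the critical points as solutions of an explicit polynomial system in which the rank-$r$ matrix and the matrix of ``Lagrange data'' play symmetric roles, and then to obtain the duality as the involution swapping the two halves of that system.

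First I would put the critical equations in Lagrange form. Fixing a full-rank factorization $P = XY^{T}$ with $X\in\CC^{m\times r}$, $Y\in\CC^{n\times r}$, differentiating $\log\ell_U$ along $(X,Y)\mapsto XY^{T}$ and adding the one multiplier $\lambda$ for the constraint $\sum_{ij}p_{ij}=1$ shows that $P\in\cV_r$ is critical for $\ell_U$ exactly when the Hadamard quotient $\Lambda:=U\oslash P$ (entries $u_{ij}/p_{ij}$) satisfies
\[ X^{T}(\Lambda-\lambda\one)=0\quad\text{and}\quad(\Lambda-\lambda\one)Y=0. \]
Pairing the identity $U=P\circ\Lambda$ (entrywise product) against $P$ in the standard bilinear form, and using that $\Lambda-\lambda\one$ then has column space in $(\operatorname{colsp}P)^{\perp}$ and row space in $(\operatorname{rowsp}P)^{\perp}$, forces $\lambda=\sum_{ij}u_{ij}=:s$: the multiplier is the \emph{same} constant for every critical point and every rank. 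Consequently $\Lambda-s\one$ has rank at most $m-r$, so $\Lambda$ has rank at most $m-r+1$ --- which is exactly the dual rank, the ``$+1$'' being contributed by the hyperplane.

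Next I would make the pair $(P,\Lambda)$ the primary object: a critical point of $\ell_U$ on $\cV_r$ is the same as a pair of matrices in $\TT^{m\times n}$ with $P\circ\Lambda=U$, with $\rk P=r$ and $\rk\Lambda=m-r+1$, and with the column and row spaces of $\Lambda-s\one$ orthogonal to those of $P$. To make the two sides genuinely interchangeable I would pass to $(m{+}1)\times(n{+}1)$ bordered matrices $\widehat P,\widehat\Lambda$: bordering by the all-ones vectors together with a sum/Lagrange entry raises a rank by exactly one and turns the affine condition $\sum p_{ij}=1$ and the shift by $s\one$ into a plain rank condition, so that on the bordered level the constraints on $\widehat P$ and on $\widehat\Lambda$ become literally symmetric. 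The map is then essentially $(\widehat P,\widehat\Lambda)\mapsto(\widehat\Lambda,\widehat P)$, followed by rescaling the $m\times n$ block to entry-sum $1$; after unbordering, it sends a rank-$r$ critical point $P$ of $\ell_U$ to a rank-$(m-r+1)$ critical point $Q$ of $\ell_U$ which is a rational function of $P$ and $U$, and it is its own inverse by construction. This is the claimed involutive bijection, and equality of ML-degrees is the immediate corollary (finiteness being supplied by the cited results). Moreover, $U=P\circ\Lambda=Q\circ\Lambda_Q$ together with the matching of the extra border entries lets one compute $\ell_U(P)\ell_U(Q)=\prod_{ij}(p_{ij}q_{ij})^{u_{ij}}$ directly; everything that depends on $P$ cancels, leaving an explicit function of $U$ (and $s$), which is the last assertion. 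For $\cSV_r$ and $\cAV_r$ the same scheme works with symmetric factorizations $P=XX^{T}$ (minding the factor $2$ on the diagonal built into the definition of $\cSV_r$) and alternating factorizations $P=XJX^{T}$: the conormal space is again governed only by $\operatorname{colsp}P$, the bordering is by a symmetric (resp.\ alternating) extra row and column, and the evenness of $r$ in the alternating case is precisely what keeps the bordered Pfaffian structure consistent.

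The step I expect to be the real obstacle is the symmetric reformulation: proving that the bordered system is genuinely invariant under exchanging its halves, that the swapped solution has all entries in $\TT$ and has exactly the predicted rank rather than a smaller one --- this is where ``sufficiently general $U$'' enters, to avoid the degeneracy loci on which the rational map is undefined or drops rank --- and that the map is a bijection on the dense locus of honest critical points, not merely on some larger incidence variety. A secondary nuisance is the bookkeeping of normalizations, so that the dual matrix again sums to $1$ and its multiplier is again $s$, and keeping the construction compatible with the loci one must delete to retain smoothness.
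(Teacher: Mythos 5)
Your Lagrange-multiplier setup is sound as far as it goes: the multiplier does equal $u_{++}=:s$ at every critical point, and the conclusion that $\Lambda-s\one\one^T$ has rank at most $m-r$, hence $\rk\Lambda\le m-r+1$ for the Hadamard quotient $\Lambda=U/P$, is correct and is essentially the paper's rank bound in disguise. The genuine gap is the next step. Your dual point, obtained by swapping $\widehat P$ and $\widehat\Lambda$ and rescaling the $m\times n$ block by a scalar, is $\Lambda/\Lambda_{++}$, and this matrix is \emph{not} a critical point of $\ell_U$ on $\cV_{m-r+1}$. The conditions you list for the pair $(P,\Lambda)$ are not symmetric under the swap: you know $\operatorname{colsp}(\Lambda-s\one\one^T)\perp\operatorname{colsp}(P)$, but criticality of the swapped point requires $\operatorname{colsp}(P-s'\one\one^T)\perp\operatorname{colsp}(\Lambda)$ for the new multiplier $s'$, and the cross terms coming from the two rank-one shifts by $\one\one^T$ do not cancel. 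Sanity check at $r=1$, $m=n=2$: the critical point is $p_{ij}=u_{i+}u_{+j}/u_{++}^2$, so $\Lambda_{ij}\propto u_{ij}/(u_{i+}u_{+j})$, whereas the unique critical point on the dual model $\cV_2$ is $U/u_{++}$; these agree only when all products $u_{i+}u_{+j}$ coincide. The correct dual point is $q_{ij}=u_{i+}u_{ij}u_{+j}/(u_{++}^3\,p_{ij})$, i.e.\ $R(U/P)K$ up to scalar with $R=\diag(u_{i+})$ and $K=\diag(u_{+j})$: the two-sided diagonal twist by the marginals of $U$ is exactly what kills the offending cross terms, and verifying that requires the fact (the paper's Lemma~\ref{lm:SuffStat}) that the row and column sums of a critical $P$ are proportional to those of $U$. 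No form of this marginal information enters your construction, and bordering by all-ones vectors cannot produce it; the same omission also breaks your final claim, since with $Q=\Lambda/\Lambda_{++}$ the product $\ell_U(P)\ell_U(Q)$ involves $\Lambda_{++}$, which depends on $P$ and not only on $U$.

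Separately, the two points you defer as ``the real obstacle''---that the dual rank is exactly $m-r+1$ rather than smaller, and that the map is a bijection on critical points rather than on a larger incidence set---are not bookkeeping. The paper resolves both at once by observing that $(P,U)\mapsto(Q',U)$ is given by the same formula in both directions, hence is a birational involution $\Crit(\cV_r)\dto\Crit(\cV_{f(r)})$; dominance follows by dimension count, so the induced map $f$ on ranks satisfies $f(f(r))=r$ and $f(r)\le m-r+1$, and the only such bijection of $[m]$ is $r\mapsto m-r+1$. Some argument of this kind must be supplied; without it the construction only lands in $\cV_k$ for some unidentified $k\le m-r+1$.
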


In the alternating case, the ML-dual of $\cAV_r$ turns out {\em not}
to be some $\cAV_s$ but rather an affine translate of a determinantal
variety defined as follows. Let $S$ be the skew $m \times
m$-matrix 
\[ S:=\begin{bmatrix}
0 & 1 & \cdots & 1 \\
-1 & 0 & \ddots & \vdots\\
\vdots & \ddots & \ddots & 1\\
-1 & \cdots & -1 & 0 
\end{bmatrix},
\]
and for even $s \in \{0,\ldots,m-1\}$ consider the variety
\[ \cAV'_s:=\left\{P \in \CC^{m \times m}\ \middle|\ P \text{ skew, }
\forall i<j: 
p_{ij} \neq 0, \text{ and } \rk(S-P)=s \right\}.
\]
Note that, unlike in $\cAV_r$, the upper triangular entries
of $P \in \cAV'_s$
are not required to add up to $1$. 

\begin{thm}[ML-duality for skew matrices]
Fix an even rank $r \in \{2,\ldots,m\}$ and let $U \in \NN^{m \times m}$
be a sufficiently general {\em symmetric} data matrix with zeroes on the
diagonal. Let $s \in \{0,\ldots,m-2\}$ be the largest even integer less
than or equal to $m-r$.  Then there is an explicit involutive bijection
between the critical points of $\ell_U$ on $\cAV_r$ and the critical
points of $\ell_U$ on $\cAV'_s$. In particular, the ML-degrees of $\cAV_r$
and $\cAV'_s$ coincide. Moreover, the product $\ell_U(P)\ell_U(Q)$
is the same for all pairs consisting of a rank-$r$ critical point $P$
on $\cAV_r$ and the corresponding rank-$s$ point $Q$ on $\cAV'_s$.
\end{thm}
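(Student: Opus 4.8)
The plan is to distill the two maximum‑likelihood problems into matrix equations for their critical points, and then to recognize the duality as an explicit Hadamard‑type involution. Set $u_{++}:=\sum_{i<j}u_{ij}$; for generic $U$ all $u_{ij}$ and $u_{++}$ are nonzero. For a skew matrix $A$ with nonzero entries above the diagonal, write $U\oslash A$ for the matrix with $(i,j)$‑entry $u_{ij}/a_{ij}$ for $i<j$; because $U$ is symmetric with zero diagonal, $U\oslash A$ is again skew. Using the perpendicularity description of $\Crit$ from the introduction, together with a Lagrange multiplier $\lambda$ for the constraint $\sum_{i<j}p_{ij}=1$, one finds that $P\in\cAV_r$ is a critical point of $\ell_U$ precisely when the skew matrix $U\oslash P-\lambda S$ lies in the normal space at $P$ to the variety of skew matrices of rank $\le r$; this normal space is $\{W\text{ skew}:WP=0\}$, and pairing with the tangent vector $P$ (the line $\CC P$ lies in that variety) forces $\lambda=u_{++}$. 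So the critical condition on $\cAV_r$ is $(U\oslash P-u_{++}S)P=0$. The identical computation --- now with no Lagrange term, since $\cAV'_s$ carries no linear constraint, only an affine shift by $S$ --- shows that $Q\in\cAV'_s$ is a critical point of $\ell_U$ precisely when $(U\oslash Q)(S-Q)=0$.

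Next I would guess the map $\Phi\colon P\mapsto\tfrac{1}{u_{++}}\,U\oslash P$, that is, $q_{ij}=u_{ij}/(u_{++}p_{ij})$ for $i<j$, and verify it has the required properties. This $Q$ is skew with nonzero entries above the diagonal, and a short computation gives $U\oslash Q=u_{++}P$ and $S-Q=-\tfrac{1}{u_{++}}(U\oslash P-u_{++}S)$. Hence, if $P$ solves the $\cAV_r$‑equation, then $S-Q$ has rank at most $m-r$, and being skew its rank is at most the largest even integer $s\le m-r$ --- this parity drop is exactly why the dual rank is $s$ rather than $m-r$ --- while $(U\oslash Q)(S-Q)=-P(U\oslash P-u_{++}S)=0$, using that $WP=0$ if and only if $PW=0$ for skew $W,P$. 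Conversely, taking the trace of $(U\oslash Q)(S-Q)=0$ and using skew‑symmetry yields $\sum_{i<j}u_{ij}/q_{ij}=u_{++}$, which says exactly that the upper‑triangular entries of $\tfrac{1}{u_{++}}U\oslash Q$ sum to $1$; so $\Phi$ applied to a critical point of $\ell_U$ on $\cAV'_s$ lands in $\cAV_r$. As $\Phi\circ\Phi=\mathrm{id}$, it restricts to an involutive bijection between the critical points of $\ell_U$ on $\cAV_r$ and those on $\bigcup_{s'\le s}\cAV'_{s'}$ (and symmetrically in the other direction). The product statement is then immediate: $p_{ij}q_{ij}=u_{ij}/u_{++}$ for all $i<j$, so $\ell_U(P)\ell_U(Q)=\prod_{i<j}(u_{ij}/u_{++})^{u_{ij}}$, which depends only on $U$.

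The remaining point --- which I expect to be the only real obstacle --- is to upgrade $\bigcup_{s'\le s}\cAV'_{s'}$ to $\cAV'_s$: for sufficiently general $U$, every critical point $Q$ produced above should have $\rk(S-Q)=s$ exactly, and dually every critical point on $\cAV'_s$ should map to a point of rank exactly $r$. The approach is a dimension count: $\Crit(\cAV_r)$ is, as in the introduction, a vector bundle over the irreducible variety $\cAV_r$, of total dimension equal to the dimension $\binom{m}{2}$ of the data space; it therefore suffices to show that the sublocus on which $\rk(U\oslash P-u_{++}S)\le s-2$ is a proper subvariety, so that it fails to dominate the data space and is avoided for generic $U$. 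Equivalently, one can invoke the general fact that the critical points of a generic likelihood function on a determinantal variety avoid its singular locus. Granting this, $\Phi$ matches the critical points on $\cAV_r$ with those on $\cAV'_s$, so the ML‑degrees agree, and the theorem follows.
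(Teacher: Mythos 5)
Your construction is the same as the paper's: the dual point is the normalized Hadamard quotient $Q'=\frac{1}{u_{++}}\,(U\oslash P)$ (the paper writes this as $\frac{2}{U_{++}}\cdot\frac{U}{P}$, with $U_{++}=2u_{++}$), and your forward and backward verifications are correct. Your derivation of the critical equations is genuinely cleaner than the paper's: you encode criticality on $\cAV_r$ as the single matrix equation $(U\oslash P-u_{++}S)P=0$ via a Lagrange multiplier, determining $\lambda=u_{++}$ by pairing with $P$ itself, and you recover the sum-to-one condition in the reverse direction by taking a trace. The paper instead spans the tangent space by explicit rank-two matrices $vw^T-wv^T$ (using an $\Sp_r$-representation argument, Lemma~\ref{lm:TangentAlt}) and determines the scalar via the marginal identity of Lemma~\ref{lm:SuffStatAlt}; your route avoids both. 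The identities $U\oslash Q=u_{++}P$, $S-Q=-\frac{1}{u_{++}}(U\oslash P-u_{++}S)$, and the product formula $p_{ij}q_{ij}=u_{ij}/u_{++}$ all check out.

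However, there is a genuine gap at the step you yourself flag as ``the remaining point.'' You correctly reduce the problem to showing that the locus in $\Crit(\cAV_r)$ where $\rk(S-Q')\leq s-2$ is a \emph{proper} subvariety, but you do not prove this; and the ``general fact'' you invoke does not apply. If the rank of $S-Q'$ drops to $s-2$, then $Q'$ is a perfectly smooth critical point of $\ell_U$ on $\cAV'_{s-2}$ --- it is not a singular point of any variety on which you are doing likelihood estimation --- so a statement that critical points of a generic likelihood function avoid the singular locus says nothing about it. (The remark after Theorem~\ref{thm:Rectangular} shows that for non-generic $U$ the rank really does drop, so properness of the bad locus is not automatic and must be argued.) The same gap affects surjectivity onto the critical points of $\cAV'_s$. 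The paper closes this with a combinatorial argument: letting $f(r)$ be the generic rank of $S-Q'$ on the irreducible variety $\Crit(\cAV_r)$ and $g$ the analogous function in the other direction, the maps $\psi_r$ and $\psi'_k$ are injective and dominant (they commute with projection to the $U$-coordinate, and $\Crit$ has dimension $\binom{m}{2}$), hence birational with $g(f(r))=r$; so $f$ is a bijection from the even integers in $\{2,\ldots,m\}$ to those in $\{0,\ldots,m-2\}$ satisfying $f(r)\leq m-r$, and the only such bijection sends $r$ to the largest even integer $\leq m-r$. You should supply this counting argument (or an explicit witness $(P,U)$ of the correct rank) to complete the proof; everything else in your write-up stands.
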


The proof is similar in each of the three cases. First, we determine the
tangent space to $X$ at a critical point $P$ of $\ell_U$ for sufficiently
general $U$.  It turns out that this space is spanned by certain rank-one
or rank-two matrices. Imposing that $P$ be a critical point, i.e., that
the derivative of $\ell_U$ vanishes in each of these low-rank directions
leads to the conclusion that a certain matrix $Q$, determined from $P$
using some involution involving the fixed matrix $U$, has rank at most
$m-r+1$ (or $s$ in the skew case) and is itself a critical point on the
variety of matrices of its rank. Letting $k \leq m-r+1$ (respectively,
$k \leq s$)  be generic rank of the $Q$s thus obtained, we reverse the
roles of $P$ and $Q$ to argue that $k$ must equal $m-r+1$ (respectively,
$s$), thus establishing the result. In the remainder of this paper we
fill in the details in each of the three cases, in particular making
the involution $P \to Q$ explicit.

\section*{Acknowledgments}

We thank Bernd Sturmfels for his encouragement and comments on early
versions of this paper.

\section{Maximum likelihood duality in the rectangular case}

Let $m \leq n$ be natural numbers and let $\cV_r \subseteq
\TT^{m \times n}$ denote the 
variety of $m \times n$-matrices of rank $r$ whose entries sum
up to $1$. Fix a sufficiently general data matrix
$U=(u_{ij})_{ij} \in \NN^{m \times
n}$, which gives rise to the likelihood function $\ell_U:\cV_r \to \TT,\
\ell_U(P)=\prod_{i,j} p_{ij}^{u_{ij}}$. Let $P \in \cV_r$ be a critical point
for $\ell_U$, which means that the derivative of $\ell_U$ vanishes on
the tangent space $T_P \cV_r$ to $\cV_r$ at $P$. This
tangent space equals 
\begin{equation} \label{eq:Tangent}
T_P \cV_r=\left\{X=(x_{ij})_{ij} \in \CC^{m \times n}\
\middle|\ X \ker P
\subseteq \im P \text{ and } \sum_{ij} x_{ij}=0\right\}. 
\end{equation} 
Here the first condition ensures that $X$ is tangent at $P$ to the
variety of rank-$r$ matrices (see, e.g., \cite[Example 14.6]{Harris92})
and the second condition ensures that $X$ is tangent to the hyperplane
where the sum of all matrix entries is $1$.

Given $X \in T_P \cV_r$, the derivative of $\ell_U$ in that direction
equals $\ell_U(P) \cdot \sum_{ij} \frac{x_{ij}
u_{ij}}{p_{ij}}$, which vanishes if and only if the second
factor vanishes.  We will
now prove that the marginals of $P$ are proportional to those of $U$
(see also \cite[Remark 4.6]{Hauenstein12}).  We write $\one$ for the
all-one vectors in both $\CC^m$ and $\CC^n$, and use self-explanatory
notation such as $u_{i+}:=\sum_j u_{ij}$ and $u_{++}:=\sum_{ij} u_{ij}$.

\begin{lm}\label{lm:SuffStat}
The column vector $P\one$ is a non-zero scalar multiple of $U\one$
and the row vector $\one^T P$ is a non-zero scalar multiple of $\one^T U$.
\end{lm}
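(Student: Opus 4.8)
The plan is to exploit the fact that $P$ is a critical point by feeding carefully chosen tangent vectors into the criticality condition $\sum_{ij} \frac{x_{ij} u_{ij}}{p_{ij}} = 0$. The key observation is that the first tangency condition in \eqref{eq:Tangent}, namely $X \ker P \subseteq \im P$, is automatically satisfied by many rank-one matrices, and in particular by any matrix of the form $X = (a\one^T)$ with $a \in \CC^m$ arbitrary: indeed such an $X$ has image contained in $\CC a$, but more to the point we want $X$ to additionally respect the hyperplane condition $\sum_{ij} x_{ij} = 0$. So first I would look for a good supply of tangent vectors supported on the structure of $P$ itself.

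Concretely, I would argue as follows. Consider matrices of the form $X = e_i \one^T - e_k \one^T$ scaled appropriately, or better, matrices $X$ of the form $X_{i} := \big(\operatorname{diag}(P\one)\big)$-type perturbations. Let me be precise: for any vector $c \in \CC^n$ with $\one^T$-pairing zero in the right sense, the matrix $X := P \operatorname{diag}(c) - (\text{scalar}) P$ lies in the tangent space, since right-multiplication by a diagonal matrix preserves both $\ker P$ and $\im P$, and we can subtract a multiple of $P$ itself (which is tangent, being in the cone over $\cV_r$ up to the sum condition) to fix the hyperplane constraint. Plugging such $X$ into the criticality equation, the terms $\frac{x_{ij} u_{ij}}{p_{ij}}$ simplify because $x_{ij} = p_{ij} c_j - \lambda p_{ij}$, so $\frac{x_{ij} u_{ij}}{p_{ij}} = u_{ij}(c_j - \lambda)$, and summing over $i,j$ forces $\sum_j c_j u_{+j} = \lambda u_{++}$. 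Running this for all valid $c$ (after accounting for the constraint that ties $\lambda$ to $c$ via the sum condition on $X$, which reads $\sum_j c_j p_{+j} = \lambda p_{++} = \lambda$) yields that the linear functional $c \mapsto \sum_j c_j u_{+j}$ is proportional to $c \mapsto \sum_j c_j p_{+j}$ on the relevant subspace, hence $\one^T U$ is a scalar multiple of $\one^T P$. The analogous argument with left-multiplication by diagonal matrices, $X := \operatorname{diag}(d) P - \mu P$, gives that $P\one$ is a scalar multiple of $U\one$. The scalars are non-zero because $U$ is a non-negative integer matrix with, for sufficiently general (in particular, generic enough to have all marginals positive) $U$, no vanishing marginals, while $P\one$ and $\one^T P$ are non-zero since $P$ has rank $r \geq 1$ and entries in $\TT$.

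The main obstacle I anticipate is bookkeeping around the hyperplane constraint: the tangent vectors $P\operatorname{diag}(c)$ and $\operatorname{diag}(d)P$ do not by themselves satisfy $\sum_{ij} x_{ij} = 0$, so one must subtract the correct multiple of $P$, and one has to check that $P$ itself (or rather the cone direction) interacts correctly — strictly, $P \notin T_P \cV_r$ because $\sum p_{ij} = 1 \neq 0$, but $P$ \emph{is} tangent to the rank variety, so $X - \lambda P \in T_P \cV_r$ precisely when $\sum_{ij}(x_{ij} - \lambda p_{ij}) = 0$, i.e. $\lambda = \sum_{ij} x_{ij}$ (using $p_{++}=1$). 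Once this is set up, the derivative computation collapses cleanly. A secondary point is to confirm that the space of available $c$'s (all of $\CC^n$, with $\lambda$ then determined) is large enough that proportionality of the two functionals on it genuinely forces proportionality of the vectors $\one^T U$ and $\one^T P$ in $\CC^n$ — which it does, since we get $\sum_j c_j(u_{+j} - \kappa\, p_{+j}) = 0$ for all $c \in \CC^n$ with $\kappa := u_{++}$ the proportionality constant forced by taking $c = \one$. I would close by noting non-vanishing of $\kappa$ and of the marginals to conclude both are honest non-zero scalar multiples.
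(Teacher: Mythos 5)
Your proof is correct and takes essentially the same route as the paper: the paper's tangent vector for the upper $2\times 2$ minor of $[P\one\,|\,U\one]$ is precisely your $\diag(d)P-\mu P$ with $d=(p_{2+},-p_{1+},0,\dots,0)$, a choice for which $\mu=0$ so no correction by $P$ is needed, and your family of all $c$ and $d$ simply packages all the minors at once. (One small slip: the opening aside that any $a\one^T$ automatically satisfies $X\ker P\subseteq\im P$ is false in general, but you discard that line and it plays no role in the argument you actually run.)
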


\begin{proof}
We prove the first statement; the second statement is proved similarly.
We want to show that the $2 \times 2$-minors of the $m\times 2$-matrix $[P\one |
U\one]$ vanish. We give the argument for the upper minor. Let $X=(x_{ij})$
be the $m \times n$-matrix whose first row equals $p_{2+}$ times the first
row of $P$, whose second row equals $-p_{1+}$ times the second row of $P$,
and all of whose other rows are zero. Then $X \in T_P \cV_r$, so that
the derivative $\sum_{ij} x_{ij} \frac{u_{ij}}{p_{ij}}$ is zero. On the
other hand, substituting $X$ into $\sum_{ij} x_{ij} \frac{u_{ij}}{p_{ij}}$
yields $u_{1+}p_{2+}-u_{2+}p_{1+}$, hence this minor is zero as desired.
The scalar multiple in both cases is $\frac{p_{++}}{u_{++}}=\frac{1}{u_{++}}$,
which is non-zero.
\end{proof}

Define $Q=(q_{ij})_{ij}$ by $p_{ij} q_{ij} = u_{i+} u_{ij} u_{+j}$. This
is going to be our dual critical point, up to a normalization factor
that we determine now.

\begin{lm} \label{lm:Normalization}
The sum $\sum_{ij} q_{ij}$ equals $(u_{++})^3$.
\end{lm}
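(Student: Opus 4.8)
The plan is to extract the identity from the criticality of $P$ by testing the derivative of $\ell_U$ against a single, well-chosen tangent vector. Let $X=(x_{ij})_{ij}$ be the $m\times n$-matrix with entries $x_{ij}:=u_{i+}u_{+j}-(u_{++})^2 p_{ij}$; equivalently, $X$ is the rank-at-most-one matrix $(u_{i+}u_{+j})_{ij}$ minus $(u_{++})^2$ times $P$. The coefficient $(u_{++})^2$ is exactly the one that makes $X$ have entry-sum zero: $\sum_{ij}u_{i+}u_{+j}=\big(\sum_i u_{i+}\big)\big(\sum_j u_{+j}\big)=(u_{++})^2$, whereas $\sum_{ij}p_{ij}=1$.

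First I would verify that $X\in T_P\cV_r$ using the description \eqref{eq:Tangent}. The entry-sum condition holds by the choice above. For the condition $X\ker P\subseteq\im P$, take $v\in\ker P$; then the $i$-th coordinate of $Xv$ equals $u_{i+}\sum_j u_{+j}v_j-(u_{++})^2(Pv)_i=u_{i+}\sum_j u_{+j}v_j$, so $Xv$ is a scalar multiple of the column vector $U\one$, which by Lemma~\ref{lm:SuffStat} is a scalar multiple of $P\one\in\im P$. Hence $Xv\in\im P$, as required, and $X$ is a genuine tangent vector at $P$.

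Since $P$ is a critical point of $\ell_U$ on $\cV_r$ and $\ell_U(P)\neq0$ (all $p_{ij}$ being nonzero), the second factor $\sum_{ij}\frac{x_{ij}u_{ij}}{p_{ij}}$ of the directional derivative of $\ell_U$ along $X$ must vanish. Substituting $X$ and using the defining relation $p_{ij}q_{ij}=u_{i+}u_{ij}u_{+j}$ together with $\sum_{ij}u_{ij}=u_{++}$,
\begin{align*}
0 &=\sum_{ij}\big(u_{i+}u_{+j}-(u_{++})^2 p_{ij}\big)\frac{u_{ij}}{p_{ij}}\\
  &=\sum_{ij}\frac{u_{i+}u_{ij}u_{+j}}{p_{ij}}-(u_{++})^3=\sum_{ij}q_{ij}-(u_{++})^3,
\end{align*}
which is precisely the assertion of the lemma.

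The only creative step here is guessing the tangent vector $X$; I expect the main (and rather mild) obstacle to be checking that it really lies in $T_P\cV_r$, and this is exactly the point at which Lemma~\ref{lm:SuffStat} is needed—to place the column space of the rank-one part of $X$ inside $\im P$. Everything after that is the one-line contraction displayed above, and in particular the argument is uniform in $r$.
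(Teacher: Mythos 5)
Your proof is correct and is essentially the paper's argument: the paper decomposes $Y=(u_{i+}u_{+j})_{ij}$ uniquely as $cP+X$ with $X\in T_P\cV_r$ and computes $c=(u_{++})^2$ from the entry sums, which is exactly your explicit tangent vector $X=Y-(u_{++})^2P$. Both proofs invoke Lemma~\ref{lm:SuffStat} in the same place (to put the image of the rank-one part inside $\im P$) and finish with the same contraction against criticality.
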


\begin{proof}
By Lemma~\ref{lm:SuffStat} the rank-one matrix $Y$ defined by
$y_{ij}=u_{i+} u_{+j}$ has image contained in $\im P$. Hence it satisfies
the linear condition $Y \ker P \subseteq \im P$, but not the
condition $\sum_{ij}
y_{ij}=0$. Similarly, $P$ itself satisfies $P \ker P
\subseteq \im P$, but
not $\sum_{ij} p_{ij}=0$. Hence, we can decompose $Y$ uniquely as $cP +
X$ where $c \in \CC$ and where $X$ satisfies $X \ker P \subseteq \im P$
and $\sum_{ij} x_{ij}=0$, i.e., where $X \in T_P \cV_r$. Then we have
\[ \sum_{ij} q_{ij} = \sum_{ij} \frac{y_{ij}
u_{ij}}{p_{ij}} = 
\sum_{ij} c u_{ij} + \sum_{ij} \frac{x_{ij} u_{ij}}{p_{ij}}
=
\sum_{ij} c u_{ij} + 0 = c u_{++} \]
by criticality of $P$. The scalar $c$ equals 
\[ \frac{\sum_{ij} y_{ij}}{\sum_{ij} p_{ij}}=
\frac{\sum_{ij}u_{i+}u_{+j}}{1}=(u_{++})^2, \]
which proves the lemma.
\end{proof}

We will use rank-one matrices in the tangent space $T_P \cV_r$. We equip
both $\CC^m$ and $\CC^n$ with their standard symmetric
bilinear forms.

\begin{lm} \label{lm:Tangent}
The tangent space $T_P \cV_r$ at $P$ is spanned
by all rank-one matrices $v w^T$ satisfying the following two conditions:
\begin{itemize}
\item $v \in \im P$ or $w \bot \ker P$; and \label{it:1}
\item $v \bot \one$ or $w \bot \one$. \label{it:2}
\end{itemize}
\end{lm}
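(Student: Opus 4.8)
The plan is to establish two things: that the rank-one matrices listed in the statement are precisely the rank-one matrices lying in $T_P\cV_r$, and that $T_P\cV_r$ is spanned by them. The first is immediate from \eqref{eq:Tangent}: for $X=vw^T$ one has $\sum_{ij}x_{ij}=(\one^Tv)(\one^Tw)$, which vanishes exactly when $v\perp\one$ or $w\perp\one$, and $vw^T\ker P\subseteq\im P$ holds exactly when $v\in\im P$ or $w\perp\ker P$ (the reverse implication being clear, while conversely, if $w\not\perp\ker P$, any $u\in\ker P$ with $w^Tu\neq0$ forces $v=(w^Tu)^{-1}vw^Tu\in\im P$). In particular every listed matrix lies in $T_P\cV_r$, so it remains to exhibit a spanning subcollection of listed matrices inside $T_P\cV_r$.

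For this I would write $T_P\cV_r=T\cap H$, where $T:=\{X\in\CC^{m\times n}:X\ker P\subseteq\im P\}$ is the tangent space at $P$ to the variety of rank-$r$ matrices and $H:=\{X:\sum_{ij}x_{ij}=0\}$, and pass to adapted coordinates. Choose a basis $e_1,\dots,e_m$ of $\CC^m$ whose first $r$ vectors span $\im P$, and a basis $w_1,\dots,w_n$ of $\CC^n$ whose first $r$ vectors span $(\ker P)^\perp$, normalized so that $\one^Te_i=1$ and $\one^Tw_j=1$ for all $i,j$. Such normalized bases exist because $\im P$ contains $P\one$ and $(\ker P)^\perp=\im P^T$ contains $P^T\one$, and both pair with $\one$ to $\sum_{ij}p_{ij}=1\neq0$; hence neither subspace is contained in the hyperplane $\one^\perp$, so each has a basis avoiding $\one^\perp$, which then extends to a basis of the ambient space still avoiding $\one^\perp$ (a complex vector space being no finite union of proper subspaces). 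The products $e_iw_j^T$ form a basis of $\CC^{m\times n}$; one checks that $e_iw_j^T\in T$ if and only if $i\le r$ or $j\le r$, and a dimension count then shows that these matrices, indexed by $\Lambda:=\{(i,j):i\le r\text{ or }j\le r\}$, form a basis of $T$. Writing $X=\sum_{(i,j)\in\Lambda}c_{ij}\,e_iw_j^T\in T$, the normalization gives $\sum_{ij}x_{ij}=\sum_{(i,j)\in\Lambda}c_{ij}$, so under this identification $T_P\cV_r=T\cap H$ is the subspace of $\CC^\Lambda$ cut out by $\sum_{(i,j)\in\Lambda}c_{ij}=0$.

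Finally I would single out, among the listed rank-one matrices, the matrices $e_i(w_j-w_{j'})^T$ with $i\le r$ (here $v=e_i\in\im P$ and $w=w_j-w_{j'}\perp\one$) and $(e_i-e_{i'})w_j^T$ with $j\le r$ (here $w=w_j\perp\ker P$ and $v=e_i-e_{i'}\perp\one$). In the coordinates above these are exactly the edge vectors of the graph on vertex set $\Lambda$ in which two vertices are adjacent when they lie in a common row $i\le r$ or a common column $j\le r$. This graph is connected---a vertex $(i,j)$ with $i\le r$ is joined to $(i,1)$ and thence to $(1,1)$, while a vertex $(i,j)$ with $i>r$ necessarily has $j\le r$, hence is joined to $(1,j)$ and thence to $(1,1)$---so its edge vectors span the whole sum-zero subspace of $\CC^\Lambda$, which is $T_P\cV_r$. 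Since these edge vectors are themselves among the listed rank-one matrices, the lemma follows. There is no serious obstacle here: the one place where the specific hypotheses enter is the normalization of the bases in the middle paragraph, which relies on $\im P$ and $(\ker P)^\perp$ not lying in $\one^\perp$---a consequence of the affine constraint $\sum_{ij}p_{ij}=1$---while the connectivity step, though it is the combinatorial heart of the argument, is entirely routine.
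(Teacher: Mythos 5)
Your proof is correct. It follows the same overall strategy as the paper---first checking that the listed matrices satisfy the two defining conditions of $T_P \cV_r$ in \eqref{eq:Tangent}, then proving spanning by a dimension count relative to decompositions adapted to $\im P$, $(\ker P)^\perp$ and $\one^\perp$---but the spanning step is organized differently. The paper splits $\CC^m = A \oplus B \oplus C$ and $\CC^n = A' \oplus B' \oplus C'$ (with $A = \one^\perp \cap \im P$, $A\oplus B=\im P$, etc.), observes that the span of the listed matrices admits $(B \otimes B') \oplus (C \otimes C')$ as a complement, and matches its dimension $1+(m-r)(n-r)$ against the codimension of $\cV_r$. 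You instead choose explicit bases normalized so that every basis vector pairs to $1$ with $\one$, identify $T$ with $\CC^\Lambda$, and reduce spanning of the sum-zero hyperplane to connectivity of a graph on $\Lambda$ whose edge vectors are realized by listed matrices of the form $e_i(w_j-w_{j'})^T$ and $(e_i-e_{i'})w_j^T$. The two arguments are of comparable length; yours has the advantage of exhibiting an explicit spanning subcollection, and, more substantively, of deriving the needed nondegeneracy conditions ($\im P \not\subseteq \one^\perp$ and $\one \notin \ker P$) directly from $\one^T P \one = \sum_{ij} p_{ij} = 1$, where the paper appeals to genericity of $U$. Your preliminary observation that the listed matrices are precisely the nonzero rank-one elements of $T_P \cV_r$ is correct but not actually needed for the lemma.
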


In the proof we will need that $\im P$ is not contained in the hyperplane
$\one^\bot$ and that, dually, $\ker P$ does not contain $\one$. These
conditions will be satisfied by genericity of $U$.

\begin{proof}
The first condition ensures that the rank-one matrices in the lemma
map $\ker P$ into $\im P$, and the second condition ensures that the
sum of all entries of those rank-one matrices is zero, so that they lie
in $T_P \cV_r$, see \eqref{eq:Tangent}.
To show that these rank-one matrices span the tangent space $T_P \cV_r$,
decompose $\CC^m$ as $A \oplus B \oplus C$ where $A \oplus C=\one^\bot$
and $A \oplus B=\im P$. Here we use that $\im P$ is not contained in
the hyperplane $\one^\bot$.

Similarly, decompose $\CC^n=A' \oplus B' \oplus C'$ where $A' \oplus C'$
is the hyperplane $\one^\bot$ and $A' \oplus B'=(\ker P)^\bot$; here
we use the second genericity assumption on $P$.  These spaces have the
following dimensions:
\begin{align*}
\dim A &= r-1 & \dim B &= 1 & \dim C &= m-r\\
\dim A' &= r-1 & \dim B' &= 1 & \dim C' &= n-r.
\end{align*}
The space spanned by the rank-one matrices in the lemma has the space $(B
\otimes B') \oplus (C \otimes C')$ as a vector space complement. The
dimension of this complement is $1+(m-r)(n-r)$, which is also the
codimension of $\cV_r$.
\end{proof}

Let $R=\diag(u_{i+})_i$ and $K=\diag(u_{+j})_j$ be the diagonal matrices
recording the row and column sums of $U$ on their diagonals.  Then,
by Lemma~\ref{lm:SuffStat}, $P\one$ is a scalar multiple of $R\one$
and $\one^T P$ is a scalar multiple of $\one^T K$. This implies that,
in the decompositions in the proof of Lemma~\ref{lm:Tangent}, we may take
$B$ spanned by $U \one = R \one$ and $B'$ spanned by $U \one = K \one$.
Note that $P,Q$ satisfy $P*Q=RUK$, where $*$ denotes the Hadamard product.

Observe also that criticality of $P$ is equivalent to $v^T R^{-1} Q K^{-1}
w=0$ for all rank-one matrices $vw^T$ as in Lemma~\ref{lm:Tangent}. This
criterion will be used in the proof of our duality result for $\cV_r$.

\begin{thm}[ML-duality for rectangular matrices]
\label{thm:Rectangular}
Let $U \in \NN^{m \times n}$ be a sufficiently general data matrix and let
$P$ be a critical point of $\ell_U$ on $\cV_r$. Define $Q=(q_{ij})_{ij}$
by $q_{ij}p_{ij}=u_{i+} u_{ij} u_{+j}$.  Then $Q/(u_{++}^3)$ is a critical
point of $\ell_U$ on $\cV_{m-r+1}$.
\end{thm}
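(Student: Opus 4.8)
The plan is to run the tangent space description of Lemma~\ref{lm:Tangent} at \emph{both} $P$ and $Q':=Q/u_{++}^{3}$. Put $\Phi:=R^{-1}QK^{-1}$, so that $\Phi_{ij}=u_{ij}/p_{ij}$; as recorded just before the theorem, $P$ is critical on $\cV_r$ exactly when $v^{T}\Phi w=0$ for every rank-one $vw^{T}$ of the shape occurring in Lemma~\ref{lm:Tangent}. Feeding well-chosen such rank-ones into this identity yields the structural facts we need. Taking $v\in A:=\im P\cap\one^{\perp}$ and $w$ arbitrary (both conditions of Lemma~\ref{lm:Tangent} then hold for free) forces $A\perp\im\Phi$, hence $\im Q=R\,\im\Phi\subseteq RA^{\perp}$, so that $\rk Q\le\dim A^{\perp}=m-r+1$ since $\dim A=r-1$. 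Dually, taking $w\in A':=(\ker P)^{\perp}\cap\one^{\perp}$ and $v$ arbitrary gives $K^{-1}A'\subseteq\ker\Phi=K\ker Q$, i.e.\ $K^{-1}A'\subseteq\ker Q$; and $P\one\parallel R\one$, $\one^{T}P\parallel\one^{T}K$ are Lemma~\ref{lm:SuffStat}. By Lemma~\ref{lm:Normalization} the entries of $Q'$ sum to $1$, and they are nonzero for generic $U$, so $Q'\in\cV_k$ with $k:=\rk Q\le m-r+1$.

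The crux is that $Q'$ is itself a critical point of $\ell_U$, now on $\cV_k$. Since $\sum_{ij}q'_{ij}=1$ we have $\im Q'\not\subseteq\one^{\perp}$ and $\one\notin\ker Q'$, so Lemma~\ref{lm:Tangent} applies at $Q'$; and since $u_{ij}/q'_{ij}=u_{++}^{3}\,p_{ij}/(u_{i+}u_{+j})$, criticality of $Q'$ amounts to $v^{T}R^{-1}PK^{-1}w=\la R^{-1}v,\;P\,K^{-1}w\ra=0$ for every rank-one $vw^{T}$ with $(v\in\im Q$ or $w\perp\ker Q)$ and $(v\perp\one$ or $w\perp\one)$. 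I would split on the last alternative. If $w\perp\one$, then $\one^{T}P\parallel\one^{T}K$ gives $\la\one,PK^{-1}w\ra=0$, so $PK^{-1}w\in\im P\cap\one^{\perp}=A$; now if $v\in\im Q$ then $R^{-1}v\in A^{\perp}$ by the first paragraph and the pairing vanishes, while if instead $w\perp\ker Q$ then $K^{-1}w\in(A')^{\perp}=\ker P+\CC\one$, whence $PK^{-1}w\in\CC\,P\one\subseteq\CC\,R\one$; as $\CC R\one$ meets $A$ only in $0$, we get $PK^{-1}w=0$. The case $v\perp\one$ is symmetric, interchanging the two structural facts and the roles of $\im$ and $\ker$ and using $P\one\parallel R\one$. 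This covers all generators, so $R^{-1}PK^{-1}\perp T_{Q'}\cV_k$ and $Q'$ is critical on $\cV_k$.

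It remains to see $k=m-r+1$. By the introduction $\Crit(\cV_r)$ is the total space of a rank-$((m-r)(n-r)+1)$ vector bundle over $\cV_r$; under the identification $(P,U)\leftrightarrow(P,\Phi=U/P)$ its fibre over $P$ is $(T_P\cV_r)^{\perp}=\{X:\im X\subseteq A^{\perp},\ X^{T}R\one\in\CC\one,\ X^{T}C\subseteq\ker P\}$, with $C$ a complement of $A$ in $\one^{\perp}$. A one-line construction gives an element of this fibre of rank exactly $m-r+1$: let $X^{T}$ kill $A$, send $R\one$ to $\one$, and map $C$ injectively into $\ker P$ (possible as $\dim C=m-r\le n-r$); then $X^{T}$ has rank $m-r+1$ because $\one\notin\ker P$, so $\im X=A^{\perp}$ and $\rk X=m-r+1$. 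As $\Crit(\cV_r)$ is irreducible, the generic $\Phi$, and with it the generic $Q=R\Phi K$, has rank exactly $m-r+1$; hence for generic $U$ every critical point $P$ of $\cV_r$ has $\rk Q=m-r+1$, and the previous paragraph yields that $Q/u_{++}^{3}$ is a critical point on $\cV_{m-r+1}$. (Running the identical recipe backwards sends such a $Q/u_{++}^{3}$ to $P$, which promotes this to the involutive bijection of the theorem.)

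The real work is the case distinction in the middle paragraph: for the ``mixed'' generators---where $v\in\im Q$ but $w\not\perp\one$, or vice versa---orthogonality to $R^{-1}PK^{-1}$ only becomes visible after combining the image/kernel constraints on $Q$ with the marginal identities for $P$, so this is where I expect to spend the most care. The exactness $\rk Q=m-r+1$ in the last paragraph is a second, smaller hurdle, since the criticality computation by itself only delivers the inequality $\rk Q\le m-r+1$.
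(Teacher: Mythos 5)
Your proof is correct. Its first two paragraphs are essentially the paper's own argument: the same rank bound obtained by feeding $v\in A$ (resp.\ $w\in A'$) into the criticality identity to get $\im Q\subseteq (R^{-1}A)^\perp$ and $K^{-1}A'\subseteq\ker Q$, and the same case analysis on rank-one generators using the marginal identities of Lemma~\ref{lm:SuffStat}; your explicit observation that Lemma~\ref{lm:Tangent} applies at $Q'$ because $\one^T Q'\one=1$ is a nice touch that the paper leaves to genericity. (Minor slip: taking $w\in A'$ gives $A'\subseteq\ker\Phi$, not $K^{-1}A'\subseteq\ker\Phi$; your conclusion $K^{-1}A'\subseteq\ker Q$ is the correct one.) Where you genuinely diverge is in proving $k=m-r+1$ exactly. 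The paper argues globally: $\psi_r$ commutes with the projection to the $U$-factor, hence is dominant, and the same formula inverts it, so the generic-rank function $f$ is a bijection of $[m]$ satisfying $f(r)\le m-r+1$, which forces $f(r)=m-r+1$. You instead exhibit an explicit element of the fibre $(T_P\cV_r)^\perp$ of rank exactly $m-r+1$ and invoke irreducibility of $\Crit(\cV_r)$. Your route is more local and elementary --- it does not rely on the dual map being an involution, so it would survive in settings where it is not --- at the cost of computing the fibre explicitly; the paper's route is softer and delivers the bijection and $f(f(r))=r$ within the same argument. One small step you should spell out: to pass from ``the generic point of $\Crit(\cV_r)$ has $\rk Q=m-r+1$'' to ``for sufficiently general $U$ \emph{every} critical point does,'' note that the locus where the rank drops is a proper closed subset, hence of dimension less than $mn$, and therefore projects into a proper closed subset of the space of data matrices.
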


Before proceeding with the proof, we point out that the construction of
$Q':=Q/(u_{++})^3$ from $P$ is symmetric in $P$ and $Q$. As a consequence,
the map $P \mapsto Q'$ from critical points of $\ell_U$ on
$\cV_r$ to critical points on $\cV_{m-r+1}$ is a bijection.  Moreover,
it has the property that $\ell_U(P) \cdot \ell_U(Q')$ depends
only on $U$. In particular, if one lists the critical points $P \in \cV_r$
with positive real entries in order of decreasing log-likelihood, then
the corresponding $Q' \in \cV_{m-r+1}$ appear in order of increasing
log-likelihood, since the sum $\log \ell_U(P)+\log
\ell_U(Q')$ depends only on $U$.

\begin{proof}   
Lemma~\ref{lm:Normalization} takes care of the normalization factor, which
we therefore ignore during most of this proof.  We first show that $Q$
has rank at most $m-r+1$.  For this we take arbitrary $v$ in the space
$A=\bf{1}^{\bot}\cap\im P$ from the proof of Lemma~\ref{lm:Tangent}
and arbitrary $w \in \CC^n$, so that $v w^T \in T_P \cV_r$.  From $v^T
R^{-1} Q K^{-1} w=0$ we conclude that $R^{-1} \im Q \subseteq A^\bot$
because $v$ was arbitrary in $A$.  Equivalently, since $R$ is diagonal
and hence symmetric, we conclude that $\im Q \subseteq(R^{-1} A)^{\bot}$.
The latter space has dimension $m-r+1$, which is therefore an upper
bound on the rank of $Q$.

Similarly, for $w \in A'$ and any $v \in \CC^m$, the matrix $v w^T$ lies
in the tangent space $T_P \cV_r$, and we find $v^T R^{-1} Q K^{-1} w=0$.
Since $v$ was arbitrary, this means that $Q K^{-1} w = 0$, so $\ker Q$
contains $K^{-1} A'$, a space of dimension $r-1$. If $n>m$,
then by the above, $\ker Q$ {\em strictly} contains
$K^{-1} A'$, but this will be irrelevant.

Next we prove that for any rank-one matrix $x y^T$ such that 
\begin{itemize}
\item $x \bot R^{-1} A$ or $y \bot K^{-1} A'$; {\em and}
\item $x \bot \one$ or $y \bot \one$ 
\end{itemize}
we have $\sum_{ij} \frac{x_i u_{ij} y_j}{q_{ij}}=0$.  Note that
the conclusion can be written as $x^T R^{-1} P K^{-1} y=0$, and
observe the similarity with the characterization of $T_P \cV_r$ in
Lemma~\ref{lm:Tangent} that will give us conditions of criticality of $Q$.

Given arbitrary $y \in \CC^n$ we can write $P K^{-1} y$ as $v + cR \one$
with $v \in A$. Then for $x \in (R^{-1} A)^\bot$ perpendicular to $\one$
we find
\[ x^T R^{-1} P K^{-1} y = x^T R^{-1} (v+cR\one)=0 + cx^T \one=0, \]
as desired. If, on the other hand, $x \in (R^{-1} A)^\bot$ is not
perpendicular to $\one$ but $y \in \CC^n$ is, then writing $w:=K^{-1} y$
we have that the vector $v:= Pw$ lies in $A$: indeed, $\one^T v$ is a
scalar multiple of $\one^T Uw$ by Lemma~\ref{lm:SuffStat}, and $\one^T
Uw=\one^T K w = \one^T y=0$.  Hence, again, $x^T R^{-1} P K^{-1} y=x^T
R^{-1} v=0$, since $x \bot R^{-1} A$.  The checks for the case where $y
\bot K^{-1} A'$ are completely analogous.

Now denote the rank of $Q$ by $k$, so that $k \leq m-r+1$. From
$\im Q \subseteq (R^{-1} A)^{\bot}$ and $(\ker Q)^\bot \subseteq
(K^{-1} A')^\bot$ we conclude that the derivative of $\ell_U$ at
$Q'$ in the direction $x y^T$ vanishes, in particular,
when $x y^T$ lies in the tangent space at $Q'$ to
$\cV_k$. Hence $Q'$ is a critical point for $\ell_U$
on $\cV_k$.

Finally, we need to show that the generic rank $k$ of $Q$ thus obtained
(from a sufficiently general $U$ and a critical point $P \in \cV_r$ of
$\ell_U$) equals $m-r+1$, rather than being strictly smaller.  For this,
observe that we have constructed, for any $r \in [m]$, a rational map
of irreducible varieties
\[ \psi_r: \Crit(\cV_r) \dto \Crit(\cV_{f(r)}), \quad
(P,U) \mapsto \left(\frac{1}{(u_{++})^3} \cdot
\frac{RUK}{P}\ , \
U\right)=(Q',U) \]
where $f:[m] \to [m]$ maps $r$ to the generic rank of the matrix $Q'$
as $(P,U)$ varies over $\Crit(\cV_r)$. Since $\psi_r$ commutes with
the projection on the second factor, its image has dimension $mn$,
hence $\psi_r$ is dominant. But it is also injective---in fact, $(P,U)$
can be recovered from $(Q',U)$ with the exact same formula. This shows
that $\psi_r$ is birational, and that $\psi_{f(r)}$ is its inverse
as a birational map. In particular we have $f(f(r))=r$, so that $f$ is a
bijection. But the only bijection $[m] \to [m]$ with the property that
$f(r) \leq m-r+1$ for all $r$ is $r \mapsto m-r+1$. Indeed, if $r$ were
the smallest value for which $f(r) \neq m-r+1$, then $m-r+1$ would not be
in the image of $f$. This concludes the proof of the theorem.  \end{proof}

\begin{re}
It {\em can} happen that the rank of $Q$ is strictly smaller than $m-r+1$
when $U$  is not sufficiently general.  For example, in the rectangular
case where $m=n=4$, if we set
\[
U:=\frac{1}{40}\left[\begin{array}{llll}
4 & 2 & 2 & 2\\
2 & 4 & 2 & 2\\
2 & 2 & 4 & 2\\
2 & 2 & 2 & 4
\end{array}\right]
\text{ and }
\, 
P:=\frac{1}{80}\left[\begin{array}{cccc}
6+2i & 5-\sqrt{5} & 5+\sqrt{5} & 4-2i\\
5-\sqrt{5} & 6-2i & 4+2i & 5+\sqrt{5}\\
5+\sqrt{5} & 4+2i & 6-2i & 5-\sqrt{5}\\
4-2i & 5+\sqrt{5} & 5-\sqrt{5} & 6+2i
\end{array}\right]
\]
then $(P,U)$ lies in $\Crit(\cV_2)$. Also, the number of points in $\Crit(\cV_2)$ with this choice of $U$ is equal to the ML-degree of this model. 
We will find that the number of critical points in $Crit(\cV_3)$  with this non-generic choice of $U$  is $\emph{not}$  equal to the ML-degree of the model. 
Since $u_{++}$=1 we have $Q=Q'$, and 
\[ 
Q=\frac{1}{500}\left[\begin{array}{cccc}
6-2i & 5+\sqrt{5} & 5-\sqrt{5} & 4+2i\\
5+\sqrt{5} & 6+2i & 4-2i & 5-\sqrt{5}\\
5-\sqrt{5} & 4-2i & 6+2i & 5+\sqrt{5}\\
4+2i & 5-\sqrt{5} & 5+\sqrt{5} & 6-2i
\end{array}\right]
\]
satisfies  $p_{ij}q_{ij}=\frac{u_{i+}u_{++}u_{+j}}{u_{++}^{3}}$. In this case, $Q$ 
has rank $2$ instead of rank $3$. This is an important fact for numerical computations.
If we were to to use the homotopy methods as in
\cite{Hauenstein12} to find the critical points of $\ell_{U}$ on ${\cV}_{3}$, we would track a path from a generic point of  $\Crit(\cV_3)$ to the point $(Q,U)$. 
Since $Q$  has rank less than $3$, this will correspond to tracking a path to a singularity leading to numerical difficulties.  
But by determining all critical points of $\ell_U$ on $\cV_{2}$,
we can use duality to avoid these numerical difficulties. That is, to determine the points of
$\Crit(\cV_3)$ with $U$ as above, we use the equation $p_{ij}q_{ij}=\frac{u_{i+}u_{++}u_{+j}}{u_{++}^{3}}$  given by ML-duality and determine which $(q_{ij})$ have rank exactly $3$.
\end{re}

\section{Maximum likelihood duality in the symmetric case}

Let $m$ be a natural number and let $\cSV_r$ denote the variety of
symmetric $m \times m$-matrices of rank $r$ whose entries sum to $2$.
A point $P$ of ${\cSV}_{r}$ and data matrix $U$ will be denoted by
\[
P=\left[\begin{array}{cccc}
2p_{11} & p_{12} & \cdots & p_{1m}\\
p_{12} & 2p_{22}\\
\vdots &  & \ddots\\
p_{1m} &  &  & 2p_{mm}
\end{array}\right]\quad \text{and} \quad U=\left[\begin{array}{cccc}
2 u_{11} & u_{12} & \cdots & u_{1m}\\
u_{12} & 2 u_{22}\\
\vdots &  & \ddots\\
u_{1m} &  &  & 2 u_{mm}
\end{array}\right].
\]
We denote the $(i,j)$-entries of $P$ and $U$ by $P_{ij}$ and $U_{ij}$
to distinguish them from the $p_{ij}$ and $u_{ij}$, respectively.
Recall that the likelihood function in the symmetric case is defined as
$\ell_U(P):=\prod_{i \leq j} p_{ij}^{u_{ij}}$, which in terms of the
entries of $P$ equals $(\prod_{i<j} P_{ij}^{u_{ij}}) \cdot (\prod_i
(P_{ii}/2)^{u_{ii}})$. From now on we fix a sufficiently general data
matrix $U$ and a critical point $P$ for $\ell_U$ on $\cSV_r$.
The tangent space $T_P \cSV_r$ equals 
\begin{equation} \label{eq:TangentSym}
T_P \cSV_r = \left\{X \in \CC^{m \times m} \text{
symmetric}\ \middle|\ X
 \ker P \subseteq \im P \text{ and }\sum_{ij} x_{ij}=0 \right\}. 
\end{equation}
Given
a tangent vector $X\in T_{P}{\cSV}_{r}$, the derivative of $\ell_{U}$
in that direction equals 
\[ \sum_{i<j}\frac{X_{ij}u_{ij}}{P_{ij}} + \sum_i \frac{(X_{ii}/2) u_{ii}}{P_{ii}/2} = \sum_{i \leq j} \frac{X_{ij}u_{ij}}{P_{ij}} \]
(up to a factor irrelevant for its vanishing). 
We set 
\[ 
U_{i+}:=\sum_j U_{ij} \text{ and } U_{++}:=\sum_{i}\sum_{j}
U_{ij},
\]
and similarly for $P$. The symmetric analogue of Lemma~\ref{lm:SuffStat}
is the following.

\begin{lm} \label{lm:SuffStatSym}
The vector $P{\bf 1}$ is a non-zero scalar multiple of $U\one$.
\end{lm}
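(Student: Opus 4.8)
The plan is to mimic the proof of Lemma~\ref{lm:SuffStat}, producing for each off-diagonal pair of indices a symmetric tangent vector that isolates the corresponding $2 \times 2$-minor of the matrix $[P\one \mid U\one]$. Concretely, to show that the minor built from rows $k$ and $l$ vanishes, I would take $X$ to be the symmetric matrix obtained by taking $U_{l+}$ times the $k$-th row of $P$ minus $U_{k+}$ times the $l$-th row of $P$, and then symmetrizing: that is, $X := e_k w^T + w e_k^T$-type combinations, where $w$ is the appropriate linear combination of rows of $P$. More carefully, set $v$ to be the $k$-th row of $P$ scaled by $U_{l+}$ minus the $l$-th row of $P$ scaled by $U_{k+}$, and let $X = e_k v^T + v e_k^T$ — wait, this is not quite symmetric in the right way, so instead one builds $X$ from $P$ by replacing its $k$-th row by $U_{l+}$ times itself and its $l$-th row by $-U_{k+}$ times itself and then symmetrizing the result, which lands in the symmetric matrices whose rows still lie in $\im P$ and hence satisfies $X \ker P \subseteq \im P$. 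The only subtlety is arranging $\sum_{ij} x_{ij} = 0$ so that $X \in T_P \cSV_r$ by \eqref{eq:TangentSym}.

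The key steps in order: (i) fix indices $k < l$ and construct the symmetric matrix $X$ whose row space is contained in $\im P$ and whose entry-sum vanishes, living therefore in $T_P \cSV_r$; (ii) since $P$ is a critical point, conclude $\sum_{i \leq j} X_{ij} u_{ij}/P_{ij} = 0$; (iii) evaluate this sum against the explicit $X$ and check that it collapses to a scalar multiple of the minor $U_{k+} P_{l+} - U_{l+} P_{k+}$, the cross terms and diagonal contributions combining cleanly because $X_{ij}/P_{ij}$ reduces to the row-scaling factors; (iv) conclude that every such $2 \times 2$-minor of $[P\one \mid U\one]$ vanishes, so the two column vectors are proportional; (v) determine the scalar by summing all entries — $P\one$ has entry-sum $\one^T P \one$, which here equals the total entry-sum of the matrix $P$, namely $2$ (or the appropriate constant in the $p_{ij}$ normalization), giving a nonzero ratio $P\one = (2/U_{++}) \, U\one$ or the analogous constant; the nonvanishing is immediate since $U_{++} \neq 0$. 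Note that, unlike the rectangular case, there is no separate statement about $\one^T P$ because $P$ is symmetric, so one proportionality suffices.

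The main obstacle is step (i)–(iii): getting the symmetric tangent vector exactly right so that its entry-sum is zero \emph{and} the derivative sum telescopes to the desired minor. In the rectangular case the matrix $X$ automatically had entry-sum zero because each of its two nonzero rows was a scalar multiple of a row of $P$ and the two scalars $p_{2+}, -p_{1+}$ were chosen to cancel the total; here symmetrization introduces contributions in columns $k$ and $l$ as well, so one must either verify the cancellation persists or correct $X$ by an element of $T_P \cSV_r$ supported where it does not affect the relevant minor (for instance by using the freedom that $P$ itself, suitably scaled, could be subtracted, as in Lemma~\ref{lm:Normalization}). A clean way around this is to observe that the \emph{space} of symmetric $X$ with $X \ker P \subseteq \im P$ and no entry-sum constraint is spanned by matrices $Pw e_j^T + e_j w^T P^T$ over $w \in \CC^m, j \in [m]$, decompose this explicitly, and then note that imposing $\sum_{ij} x_{ij} = 0$ removes exactly a one-dimensional piece transverse to $P$; the relation $\sum_{i\le j} X_{ij} u_{ij}/P_{ij} = 0$ then extends, via the $P$-direction and Lemma-\ref{lm:Normalization}-type bookkeeping, to \emph{all} such $X$ up to a correction proportional to $U_{++}$, after which plugging in $X = Pe_k e_l^T + e_l e_k^T P^T$ and reading off coefficients yields the proportionality of $P\one$ and $U\one$ directly. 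Once proportionality is in hand, the scalar and its nonvanishing follow exactly as in Lemma~\ref{lm:SuffStat}.
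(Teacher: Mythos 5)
Your overall strategy---produce a symmetric rank-two tangent vector at $P$ supported in rows and columns $k$ and $l$, and read off the $2\times 2$-minor of $[P\one\mid U\one]$ from the criticality condition---is exactly the paper's, and you correctly flag the entry-sum constraint as the one delicate point. But the concrete construction you commit to has the scaling constants inverted, and this is not cosmetic. You scale the $k$-th and $l$-th rows of $P$ by $U_{l+}$ and $-U_{k+}$ before symmetrizing. For $X_0 = U_{l+}(e_kw_k^T + w_ke_k^T) - U_{k+}(e_lw_l^T+w_le_l^T)$, with $w_j$ the $j$-th column of $P$, the total entry-sum is $2(U_{l+}P_{k+} - U_{k+}P_{l+})$, i.e.\ twice the very minor you are trying to kill---so membership of $X_0$ in $T_P\cSV_r$ is equivalent to the conclusion, and the argument is circular. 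Worse, the derivative $\sum_{i\le j}(X_0)_{ij}u_{ij}/P_{ij}$ collapses (using symmetry of $P$, since $e_k^T(\frac{U}{P})Pe_k=\sum_j \frac{U_{kj}}{P_{kj}}P_{jk}=U_{k+}$) to $U_{l+}U_{k+} - U_{k+}U_{l+} = 0$ identically, so even granting tangency, criticality tells you nothing. The paper's choice is the literal transcription of the rectangular case that you yourself quote: scale by the $P$-marginals, i.e.\ take $v_1=(P_{2+},0,\dots,0)^T$, $v_2=(0,P_{1+},0,\dots,0)^T$ and $X=(v_1w_1^T+w_1v_1^T)-(v_2w_2^T+w_2v_2^T)$. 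Then the entry-sum is $2(P_{2+}P_{1+}-P_{1+}P_{2+})=0$ automatically, and the derivative is $P_{2+}U_{1+}-P_{1+}U_{2+}$, which criticality forces to vanish.

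Your fallback of correcting $X_0$ by a multiple of $P$ (in the spirit of Lemma~\ref{lm:Normalization}) would in fact rescue your scalars: writing $X=X_0-cP$ with $c=U_{l+}P_{k+}-U_{k+}P_{l+}$ (recall $P_{++}=2$) puts $X$ into $T_P\cSV_r$, and criticality then gives $0=0-c\,u_{++}$, hence $c=0$. But you offer this only as one of several possible escapes and never carry it out; and your ``clean way'' ends by plugging in $X=Pe_ke_l^T+e_le_k^TP^T$, whose derivative $\sum_i P_{ik}U_{il}/P_{il}$ does not telescope to anything involving marginals when $k\ne l$ (the telescoping requires the basis vector and the column of $P$ to carry the \emph{same} index). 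So as written the proposal does not establish the vanishing of the minor; the fix is small but essential. The concluding scalar $2/U_{++}$ and the observation that symmetry makes a second proportionality statement unnecessary are both correct.
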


\begin{proof}
We need to prove that the $m \times 2$-matrix $(P\one|U\one)$ has $2
\times 2$-minors equal to zero. We prove this for the minor in the
first two rows. Set $a:=P_{1+}$ and $b:=P_{2+}$, and define $v_1,v_2
\in \CC^m$ by
$v_1=(b,0,0,\ldots,0)^T, v_2=(0,a,0,\ldots,0)$. 
Let $w_1,w_2$ be the first and second column of $P$, respectively.
Then for each $i=1,2$ the matrix $X^{(i)}=v_i w_i^T + w_i v_i^T$ lies in
the tangent space at $P$ to the variety of symmetric rank-$r$ matrices,
and the difference $X:=X^{(1)}-X^{(2)}$ has sum of entries equal to $0$
and therefore lies in $T_P \cSV_r$. The symmetric matrix $X$ looks like
\[ 
\begin{bmatrix}
2b P_{11} & (b-a)P_{12} & b P_{13} & \cdots & b P_{1m}\\
* & 2a P_{22} & -a P_{23} & \cdots & -a P_{2m}\\
* & * & 0 & \cdots & 0\\
\vdots & \vdots & \vdots & & \vdots\\
* & * & 0 & \cdots & 0 
\end{bmatrix}.
\]
The derivative of $\ell_U$ at $P$ in the direction $X$ equals
\[ \sum_{i \leq j} \frac{X_{ij}u_{ij}}{P_{ij}}
= b U_{1+} - a U_{2+}, \]
and this vanishes by criticality of $P$. The
non-zero scalar is 
$\frac{P_{++}}{U_{++}}=\frac{2}{U_{++}}$.
\end{proof}

The analogue of $R,K$ from the rectangular case is $R:=
\diag\left(U_{1+},\dots,U_{m+}\right),$ which by symmetry of $U$ equals
$\diag\left(U_{+1},\dots,U_{+m}\right)$.  As in the rectangular case,
define the symmetric matrix $Q$ by $P * Q= RUR$, i.e., $P_{ij} Q_{ij} =
U_{i+} U_{ij} U_{j+}$ for $i,j \in [m]$.  This will be our dual critical
point, up to a normalizing factor to be determined now.

\begin{lm} \label{lm:NormalizationSym}
The sum $\sum_{ij} Q_{ij}$ equals $\frac{(U_{++})^3}{2}$.
\end{lm}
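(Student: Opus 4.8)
The plan is to follow the proof of Lemma~\ref{lm:Normalization} from the rectangular case, paying attention to the factor-of-two conventions on the diagonal. Introduce the rank-one symmetric matrix $Y := (R\one)(R\one)^T$, whose $(i,j)$-entry is $U_{i+}U_{j+}$. The defining relation $P*Q = RUR$ says exactly that $P_{ij}Q_{ij} = Y_{ij}U_{ij}$ for all $i,j$, so $Q_{ij} = Y_{ij}U_{ij}/P_{ij}$, and we must compute $\sum_{ij}Q_{ij}$.

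First I would record the relevant properties of $Y$. Each column of $Y$ is a scalar multiple of $R\one = U\one$, which by Lemma~\ref{lm:SuffStatSym} is a non-zero scalar multiple of $P\one$; hence $\im Y = \langle P\one\rangle \subseteq \im P$, and in particular $Y\ker P \subseteq \im P$, so $Y$ satisfies the first condition in \eqref{eq:TangentSym}. However $\sum_{ij}Y_{ij} = (\sum_i U_{i+})^2 = (U_{++})^2 \neq 0$, so $Y$ does not lie in $T_P\cSV_r$. The matrix $P$ itself also satisfies $P\ker P \subseteq \im P$, while $\sum_{ij}P_{ij} = P_{++} = 2 \neq 0$. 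Since $T_P\cSV_r$ is the hyperplane $\{Z\ \text{symmetric}: Z\ker P \subseteq \im P,\ \sum_{ij}z_{ij} = 0\}$ inside the space of symmetric matrices $Z$ with $Z\ker P \subseteq \im P$, and $P$ sits in that larger space but off the hyperplane, I can write $Y = cP + X$ uniquely with $X \in T_P\cSV_r$; applying the functional ``sum of all entries'' to both sides gives $c = (\sum_{ij}Y_{ij})/(\sum_{ij}P_{ij}) = (U_{++})^2/2$.

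Then I would compute $\sum_{ij}Q_{ij} = \sum_{ij}\frac{Y_{ij}U_{ij}}{P_{ij}} = c\sum_{ij}U_{ij} + \sum_{ij}\frac{X_{ij}U_{ij}}{P_{ij}} = c\,U_{++} + \sum_{ij}\frac{X_{ij}U_{ij}}{P_{ij}}$, and the last full double sum should turn out to be zero by criticality of $P$. This is the one step that needs care: splitting off the diagonal and using $U_{ii} = 2u_{ii}$, $P_{ii} = 2p_{ii}$ together with the symmetry of $X$, $U$ and $P$, one finds $\sum_{ij}\frac{X_{ij}U_{ij}}{P_{ij}} = \sum_i\frac{X_{ii}u_{ii}}{p_{ii}} + 2\sum_{i<j}\frac{X_{ij}u_{ij}}{p_{ij}} = 2\sum_{i\leq j}\frac{X_{ij}u_{ij}}{P_{ij}}$, which vanishes because $X \in T_P\cSV_r$ and $P$ is a critical point. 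Hence $\sum_{ij}Q_{ij} = c\,U_{++} = (U_{++})^3/2$, as claimed. Apart from the diagonal-doubling bookkeeping in this final identity, the argument is a direct transcription of the rectangular proof, so I do not expect any genuine obstacle.
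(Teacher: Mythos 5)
Your proposal is correct and follows the paper's own proof essentially verbatim: the same rank-one matrix $Y$ with entries $U_{i+}U_{j+}$, the same decomposition $Y=cP+X$ with $c=Y_{++}/P_{++}=(U_{++})^2/2$, and the same appeal to criticality to kill the remaining sum. Your explicit verification that $\sum_{ij}X_{ij}U_{ij}/P_{ij}=2\sum_{i\le j}X_{ij}u_{ij}/P_{ij}$ is a welcome spelling-out of a diagonal-bookkeeping step the paper leaves implicit.
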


\begin{proof}
By Lemma~\ref{lm:SuffStatSym} the rank-one matrix $Y$ with entries
$Y_{ij}=U_{i+}U_{j+}$ has image contained in $\im P$, and so does $P$. So
we can decompose $Y=cP + X$ with $c \in \CC$ and $X \in T_P \cSV_r$, and we find
\[ \sum_{ij} Q_{ij} = \sum_{ij} \frac{Y_{ij} U_{ij}}{P_{ij}} 
= \sum_{ij} c U_{ij} + \sum_{ij} \frac{X_{ij} U_{ij}}{P_{ij}} 
= c U_{++} + 0 = c U_{++}.
\]
Moreover, the scalar $c$ equals
$\frac{Y_{++}}{P_{++}}=\frac{(U_{++})^2}{2}$, which shows
that $Q_{++}=\frac{(U_{++})^3}{2}$.
\end{proof}

As in the rectangular case, we will make use of low-rank elements in
$T_P \cSV_r$, where now ``low rank'' means rank two.

\begin{lm} \label{lm:TangentSym}
The tangent space $T_P \cSV_r$ is spanned by all matrices of the form
$vw^{T}+w^{T}v$ with $v\in im(P)$ and $w\in\mathbb{C}^m$, with the
additional constraint that the sum of all entries is zero, i.e., that
one of $v$ and $w$ is perpendicular to $\bf{1}$.
\end{lm}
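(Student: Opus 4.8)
The plan is to prove Lemma~\ref{lm:TangentSym} by exhibiting an explicit vector-space complement to the span of the claimed rank-two matrices inside the space of all symmetric matrices, and checking that this complement has the right dimension---namely the codimension of $\cSV_r$ in the space of symmetric matrices with prescribed entry sum. This mirrors the argument used for Lemma~\ref{lm:Tangent} in the rectangular case, the only new feature being that we now work with symmetric tensors rather than arbitrary ones.

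First I would record that every matrix of the form $vw^T + wv^T$ with $v \in \im P$ (and with one of $v,w$ perpendicular to $\one$) does lie in $T_P\cSV_r$: it is manifestly symmetric, its sum of entries is $(\one^T v)(\one^T w) + (\one^T w)(\one^T v) = 2(\one^T v)(\one^T w) = 0$ by the perpendicularity hypothesis, and it sends $\ker P$ into $\im P$ because $vw^T$ does (as $v \in \im P$) and $wv^T$ kills $\ker P$ entirely (since for $z \in \ker P$ one has, using symmetry of $P$, that $v^T z$ pairs $v \in \im P$ with $z \in \ker P = (\im P)^\bot$, giving $0$). Here I use, as in the rectangular case and by genericity of $U$, that $\im P \not\subseteq \one^\bot$ and $\one \notin \ker P$; by Lemma~\ref{lm:SuffStatSym} we may in fact take the one-dimensional ``extra'' direction in $\im P$ to be spanned by $U\one = R\one$.

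Next I would set up the decomposition $\CC^m = A \oplus B \oplus C$ exactly as before, where $A \oplus C = \one^\bot$, $A \oplus B = \im P$, $\dim A = r-1$, $\dim B = 1$, $\dim C = m-r$, with $B$ spanned by $R\one$. The span of the matrices $vw^T + wv^T$ with $v \in A \oplus B = \im P$ and ($v \bot \one$ or $w \bot \one$) should then have $\mathrm{Sym}^2(B) \oplus \mathrm{Sym}^2(C)$ (viewed inside symmetric matrices via $v,w \mapsto vw^T + wv^T$, or equivalently the symmetric square) as a complement: symmetrized products with at least one factor in $A$, or one factor in $B$ and the perpendicular factor free, are all realized, and what is missing is precisely the symmetric products living entirely in $B$ or entirely in $C$. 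The dimension of this complement is $\binom{1+1}{2} + \binom{m-r+1}{2} = 1 + \binom{m-r+1}{2}$, which should match the codimension of $\cSV_r$ inside the hyperplane of symmetric matrices with fixed entry sum; indeed the variety of symmetric $m\times m$ matrices of rank $r$ has codimension $\binom{m-r+1}{2}$ in all symmetric matrices, and intersecting with one more hyperplane adds $1$.

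The main obstacle I anticipate is the bookkeeping in the complement computation: in the rectangular case the span of $v\otimes w$ over $v \in A\oplus B$, $w$ arbitrary, with the parity constraint, had the clean complement $(B\otimes B')\oplus(C\otimes C')$ because the two sides $\CC^m$ and $\CC^n$ decouple; here $v$ and $w$ live in the same space and enter symmetrically, so one must be careful that, for instance, $vw^T + wv^T$ with $v \in B$ and $w \in C$ (which is allowed, since $w \bot \one$) really does land outside the naive complement, and that no symmetric matrix supported on $B\oplus C$ with a genuine $B$-$C$ cross term is being double-counted or omitted. Concretely I would verify the claim by choosing a basis adapted to $A \oplus B \oplus C$ and writing a general symmetric matrix in block form $(X_{\alpha\beta})$ with $\alpha,\beta \in \{A,B,C\}$: the span contains every block except $X_{BB}$ and $X_{CC}$, and $X_{BC}$ is recovered from products $vw^T + wv^T$ with $v \in B$, $w \in C \subseteq \one^\bot$. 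Once the matching of dimensions is in place, the lemma follows since a subspace of $T_P\cSV_r$ whose codimension in symmetric matrices equals $\dim \cSV_r$'s codimension... rather, whose complement in symmetric matrices has dimension equal to $\mathrm{codim}\,\cSV_r$, and which is contained in the tangent space, must be all of $T_P\cSV_r$.
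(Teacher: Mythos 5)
Your proposal is correct and follows essentially the same route as the paper: the same decomposition $\CC^m = A \oplus B \oplus C$ with $A \oplus B = \im P$ and $A \oplus C = \one^\bot$, and the same identification of the span as all blocks of $S^2\CC^m$ except $S^2(B)$ and $S^2(C)$. The only cosmetic difference is that you count the dimension of the complement, $1+\binom{m-r+1}{2}=\operatorname{codim}\cSV_r$, where the paper counts the dimension of the span and matches it to $\dim\cSV_r$.
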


In the proof we will implicitly use that $\im P$ is not
contained in $\one^\bot$, which is true by genericity of
$U$.

\begin{proof}
The proof is similar to that of Lemma~\ref{lm:Tangent}. First, the matrices
in the lemma satisfy the conditions characterizing $T_P
\cSV_r$; see
\eqref{eq:TangentSym}. Second, to show that they span that
tangent space, split $\CC^m$ as $A \oplus B
\oplus C$ with $A \oplus B=\im P$
and $A \oplus C=\one^\bot$, so that the second symmetric power $S^2 \CC^m$ equals
\[ S^2(A) \oplus S^2(B) \oplus S^2(C) \oplus (A \otimes B)
\oplus (A \otimes C) \oplus (B \otimes C). \]
The matrices in the lemma span $S^2(A) + A \otimes B + (A \oplus
B)  \otimes C$. This space has dimension $\binom{r}{2}+(r-1)+r(n-r)$,
which equals $\binom{r+1}{2} + r(n-r) -1=\dim \cSV_r$. 
\end{proof}

By Lemma~\ref{lm:TangentSym}, it suffices to understand the derivative
$\sum_{i \leq j} \frac{ X_{ij} u_{ij} }{P_{ij}}$ for $X$ equal to
$vw^{T}+wv^{T}$, in which case it equals
\[
\sum_{i\leq j}\frac{X_{ij}u_{ij}}{P_{ij}}=\sum_{i\leq j}\left(v_{i}w_{j}+w_{i}v_{j}\right)\frac{u_{ij}}{P_{ij}}=v^{T}\left[\begin{array}{cccc}
\frac{2u_{11}}{P_{11}} & \frac{u_{12}}{P_{12}} & \cdots & \frac{u_{1m}}{P_{1m}}\\
\frac{u_{12}}{P_{12}} & \frac{2u_{22}}{P_{22}}\\
\vdots &  & \ddots\\
\frac{u_{1m}}{P_{1m}} &  &  & \frac{2u_{mm}}{P_{mm}}
\end{array}\right]w.
\]
The right-hand side can be concisely written as $v^T(\frac{U}{P})w$, where
$\frac{U}{P}$ is the Hadamard (element-wise) quotient of $U$ by $P$.  So
criticality of $P$ is equivalent to the statement that $v^T(\frac{U}{P})w$
vanishes for all $v,w$ as in Lemma~\ref{lm:TangentSym}.
This, in turn, is equivalent to the condition that
$v^{T}R^{-1}QR^{-1}w=0$
for all $v,w$ as in Lemma~\ref{lm:TangentSym}. We now state
and prove our duality result in the symmetric case. 

\begin{thm}[ML-duality for symmetric matrices] \label{thm:Symmetric}
Let $U \in \NN^{m \times m}$ be a
sufficiently general symmetric data matrix, and let $P$ be a critical point of
$\ell_U$ on $\cSV_r$.  Define the matrix $Q$ by $P_{ij} Q_{ij}=U_{i+}
U_{ij} U_{j+}$. Then $4Q/(U_{++})^3$ is a critical point of $\ell_U$
on $\cSV_{m-r+1}$.
\end{thm}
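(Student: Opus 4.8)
The plan is to follow the exact blueprint laid out in the rectangular case (Theorem~\ref{thm:Rectangular}), adapting each step to the symmetric setting where tangent spaces consist of symmetric matrices of the form $vw^T + wv^T$ and the two diagonal conjugating matrices $R,K$ coalesce into a single $R$. First I would normalize: Lemma~\ref{lm:NormalizationSym} already tells us $Q_{++} = (U_{++})^3/2$, so setting $Q' := 4Q/(U_{++})^3$ gives $Q'_{++} = 2$, which is the required entry-sum for a point of $\cSV_s$. Then I would suppress the scalar factor for the remainder of the argument, working with $Q$ directly and keeping track of the fact that criticality of $P$ is equivalent to $v^T R^{-1} Q R^{-1} w = 0$ for all $v \in \im P$, $w \in \CC^m$ with $v \perp \one$ or $w \perp \one$, as recorded just before the theorem statement.

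Next I would establish the rank bound $\rk Q \leq m-r+1$. Taking $v$ in the space $A = \one^\bot \cap \im P$ and $w$ arbitrary, the matrix $vw^T + wv^T$ lies in $T_P \cSV_r$; feeding it into criticality gives $v^T R^{-1} Q R^{-1} w = 0$ for all $w$, hence $R^{-1} Q R^{-1} w \perp A$ for all $w$, i.e. $\im(R^{-1} Q R^{-1}) \subseteq A^\bot$. Since $R$ is invertible and symmetric this forces $\im Q \subseteq (R^{-1} A)^\bot$, a subspace of dimension $m - \dim A = m - (r-1) = m-r+1$. By symmetry of $Q$ this simultaneously controls $\ker Q \supseteq R^{-1} A$. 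So far this is a verbatim transcription of the rectangular argument with $K$ replaced by $R$ and the rank-one tangent vectors replaced by their symmetrizations.

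Then I would verify that $Q'$ is critical on $\cSV_k$, where $k := \rk Q \leq m-r+1$. For this I show that $x^T R^{-1} P R^{-1} y = 0$ whenever $x \perp R^{-1} A$ or $y \perp R^{-1} A$, and $x \perp \one$ or $y \perp \one$ --- this is the symmetric analogue of the intermediate computation in the rectangular proof. The two cases are: if $x \perp R^{-1} A$ and $x \perp \one$, write $P R^{-1} y = v + cR\one$ with $v \in A$ (using that $\im P = A \oplus B$ with $B$ spanned by $R\one = U\one$, by Lemma~\ref{lm:SuffStatSym}), and then $x^T R^{-1}(v + cR\one) = 0 + c\, x^T \one = 0$; if instead $x \perp R^{-1} A$ but $x \not\perp \one$ while $y \perp \one$, then $P R^{-1} y$ lands in $A$ because $\one^T P R^{-1} y$ is a scalar multiple of $\one^T U R^{-1} y = \one^T R R^{-1} y = \one^T y = 0$ by Lemma~\ref{lm:SuffStatSym}, and again $x^T R^{-1} P R^{-1} y = 0$. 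Combining $\im Q \subseteq (R^{-1}A)^\bot$ and $\ker Q \supseteq R^{-1} A$ with this identity and Lemma~\ref{lm:TangentSym} applied at $Q'$ on $\cSV_k$, every symmetrized rank-one tangent vector at $Q'$ annihilates the derivative of $\ell_U$, so $Q'$ is critical on $\cSV_k$.

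Finally, the bijectivity-and-rank-equality step: the construction $P \mapsto Q'$ is symmetric (one recovers $P$ from $Q'$ by the identical formula $P_{ij} Q'_{ij} = \tfrac{4}{(U_{++})^3} U_{i+} U_{ij} U_{j+}$ together with the normalization), so it defines a dominant injective --- hence birational --- rational map $\Crit(\cSV_r) \dashrightarrow \Crit(\cSV_{g(r)})$ commuting with projection to $U$, where $g(r)$ is the generic rank of $Q'$. The same dimension count as in the rectangular proof shows dominance, injectivity gives birationality, so $g \circ g = \mathrm{id}$, $g$ is a bijection $[m] \to [m]$ with $g(r) \leq m-r+1$, and the only such bijection is $r \mapsto m-r+1$. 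I expect the main obstacle to be the bookkeeping in the symmetric tangent-space decomposition --- making sure that $B$ can indeed be taken one-dimensional and spanned by $U\one = R\one$, that $\im P$ is genuinely not contained in $\one^\bot$ for generic $U$ (the genericity hypothesis flagged before Lemma~\ref{lm:TangentSym}), and that $(R^{-1}A)^\bot$ has exactly the claimed dimension $m-r+1$ --- rather than any conceptually new difficulty, since the logical skeleton is identical to the rectangular case. One small point to check carefully is that the symmetrized tangent vectors $vw^T + wv^T$ with $v \in A$ really do span enough of the tangent space for the criticality-of-$Q'$ conclusion, but this is exactly what Lemma~\ref{lm:TangentSym} provides.
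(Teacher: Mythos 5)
Your proposal is correct and follows essentially the same route as the paper's own proof: the rank bound via $\im Q \subseteq (R^{-1}A)^\perp$, the two-case verification of $x^T R^{-1} P R^{-1} y = 0$ using Lemma~\ref{lm:SuffStatSym}, criticality on $\cSV_k$ via Lemma~\ref{lm:TangentSym}, and the involution/bijection argument borrowed from Theorem~\ref{thm:Rectangular} to force $k = m-r+1$. The only (immaterial) deviation is in the second case of the intermediate identity, where you argue that $PR^{-1}y$ lands in $A$ as in the rectangular proof, whereas the paper instead takes $x = cR\one$ and applies Lemma~\ref{lm:SuffStatSym} directly.
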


As in the rectangular case, the map $P \mapsto Q':=4Q/(U_{++})^3$
is a bijection by virtue of the symmetry in $P$ and $Q$, and the same
conclusions for the cricital points with positive real entries can be
drawn as in the rectangular case.

\begin{proof}
The normalizing factor was dealt with in Lemma~\ref{lm:NormalizationSym}
and will be largely ignored in what follows.  As in the proof of
Lemma~\ref{lm:TangentSym}, decompose $\CC^m$ as $A \oplus B \oplus C$
with $A \oplus B=\im P$ and $A \oplus C=\one^\bot$. So $A$ has dimension
$r-1$, $C$ has dimension $m-r$, and $B$ has dimension $1$.  We take $B$
to be spanned by $P\one$, which is a non-zero scalar multiple of $R\one$
by Lemma~\ref{lm:TangentSym}.

First we bound the rank of $Q$. To do so we prove that the image of $Q$
is contained a space of dimension $m-r+1$. Indeed, by criticality of $P$
we have $v^{T}R^{-1}QK^{-1}w=0$ for $w\in\mathbb{C}^{m}$, $v\in\im P$
such that $v\perp{\bf 1}$ or $w\perp{\bf 1}$. Taking $w$ arbitrary and
$v$ in $A$, we find that $\im Q \subseteq (R^{-1} A)^\bot$, which has
dimension $m-r+1$.

Next we show that
\[
x^{T}R^{-1}PK^{-1}y=0
\]
for any $x \in (R^{-1}A)^{\perp}$ and $y \in \CC^m$ with $x\perp{\bf 1}$
or $y\perp{\bf 1}$.
First, suppose $x \bot \one$. Since $PK^{-1}y$ may be written as $a+cR{\bf 1}$ with $a\in A$ and scalar $c$, we find
\[
x^{T}R^{-1}PK^{-1}y=x^{T}R^{-1}a+cx^{T}R^{-1}R{\bf 1}=x^{T}R^{-1}a+0=0.
\]
Otherwise, we have $y \bot \one$ and we may assume $x=cR\one$ with $c$ a 
scalar. In this case, we have
$x^{T}R^{-1}PK^{-1}y=c{\bf 1}^{T}PK^{-1}y$, which by
Lemma~\ref{lm:SuffStatSym} equals a scalar multiple of 
${\bf 1}^TKK^{-1}y={\bf 1}^T y=0$.

Let $k$ be the rank of $Q$. Since $\im Q\subset (R^{-1}A)^{\perp}$
we conclude that $x^{T}R^{-1}PK^{-1}y=0$ holds, in particular,
for all matrices $xy^T+yx^T$ spanning the tangent space to
$\cSV_k$ at $Q'$, so that $Q'$ is critical. By reversing the roles of $P$
and $Q$ and using the involution argument at the end of the proof of
Theorem~\ref{thm:Rectangular}, we conclude that for generic $U$ the value
of $k$ equals $m-r+1$ (rather than being strictly smaller). This proves
the theorem.
\end{proof}

\section{Duality in the skew-symmetric case}

The skew-symmetric case, while perhaps not of direct relevance to
statistics, is of considerable algebro-geometric interest \cite{Hosten05}, since the
variety $\cAV_r$, consisting of skew-symmetric matrices of {\em even}
rank $r$ whose upper-triangular entries are non-zero and add up to $1$,
is (an open subset of a hyperplane section of the affine
cone over) a
secant variety of the Grassmannian of $2$-spaces in $\CC^m$. Recall
that we want to prove that $\cAV_r$ (the {\em intersection} of a determinantal
variety with an affine hyperplane) is ML-dual to the {\em affine translate}
$\cAV'_s$ of a determinantal variety.

A point $P$ of ${\cAV}_{r}$ and data matrix $U$ will be denoted by
\[
P=\left[\begin{array}{cccc}
0 & p_{12} & \cdots & p_{1m}\\
-p_{12} & 0 \\
\vdots &  & \ddots\\
-p_{1m} &  &  & 0  
\end{array}\right]\quad \text{and} \quad
U=\left[\begin{array}{cccc}
0 & u_{12} & \cdots & u_{1m}\\
u_{12} & 0 \\
\vdots &  & \ddots\\
u_{m1} &  &  & 0
\end{array}\right].
\]
Note that $U$ is {\em symmetric} rather than alternating. We
fix a sufficiently general data matrix $U$ and a critical
point $P$ for $\ell_U$ on $\cAV_r$. The tangent space $T_P \cAV_r$ equals 
\[ 
T_P \cAV_r = \left\{X \in \CC^{m \times m} \text{ skew}\ 
\middle|\ 
X\ker P \subseteq \im P \text{ and } \sum_{i<j}
x_{ij}=0\right\}.
\]
The derivative of $\ell_U$ at $P$ in the direction $X$
equals
$\sum_{i<j} \frac{x_{ij} u_{ij}}{p_{ij}}$,
up to a factor irrelevant for its vanishing.
The following lemma is the skew analogue of Lemmas~\ref{lm:SuffStat}
and~\ref{lm:SuffStatSym}.

\begin{lm} \label{lm:SuffStatAlt}
The vector $a=\left(\sum_{j<i} p_{ji} + \sum_{j>i}
p_{ij}\right)_i$ is a scalar
multiple of $U \one$.
\end{lm}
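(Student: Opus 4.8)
The plan is to mimic the proofs of Lemmas~\ref{lm:SuffStat} and~\ref{lm:SuffStatSym}, producing for each pair of indices a suitable rank-two skew matrix in $T_P\cAV_r$ whose associated directional derivative, evaluated by criticality of $P$, forces a $2\times 2$ minor of the matrix $[a\,|\,U\one]$ to vanish. The vector $a$ defined in the statement is precisely $P\one$ up to sign: its $i$-th entry is $\sum_{j>i}p_{ij}-\sum_{j<i}(-p_{ji})$... wait, one must be careful—with $P$ skew, $(P\one)_i = \sum_j P_{ij} = \sum_{j>i}p_{ij} - \sum_{j<i}p_{ji}$, whereas the stated $a_i = \sum_{j<i}p_{ji}+\sum_{j>i}p_{ij}$ uses all plus signs. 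So $a$ is \emph{not} $P\one$; rather $a_i$ records the sum of the upper-triangular entries in row/column $i$. First I would record this identification clearly, and note that $a = |P|\one$ where $|P|$ is the symmetric matrix of absolute values (i.e.\ $|P|_{ij}=p_{ij}$ for $i<j$, $p_{ji}$ for $i>j$); equivalently $a = (P_+ - P_+^T)\one$ where $P_+$ is the strict upper triangle, but the cleanest description is just the coordinatewise one.

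Next I would set up, for a fixed pair $1\le k<\ell\le m$, the analogue of the construction in Lemma~\ref{lm:SuffStatSym}: take $v_1 = a_\ell e_k$, $v_2 = a_k e_\ell$ (so that the eventual minor is $a_k a_\ell$–balanced), let $w_1, w_2$ be the $k$-th and $\ell$-th columns of $P$, and form $X^{(i)} = v_i w_i^T - w_i v_i^T$ — the \emph{skew} analogue, so that $X^{(i)}$ is skew and lies in the tangent space to the variety of skew rank-$r$ matrices at $P$ (it has the form $P$ times a rank-one update, hence maps $\ker P$ into $\im P$). Then $X := X^{(1)} - X^{(2)}$ is skew, and I would check that its upper-triangular entries sum to zero, so $X\in T_P\cAV_r$; the entries of $X$ outside rows/columns $k,\ell$ vanish, and the bookkeeping of the few nonzero entries (positions $(k,j)$, $(\ell,j)$, and $(k,\ell)$) is routine. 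Evaluating $\sum_{i<j} x_{ij}u_{ij}/p_{ij}$ on this $X$ and invoking criticality of $P$ should collapse to $a_\ell U_{k+} - a_k U_{\ell+} = 0$ (using $U$ symmetric so that $U_{i+}=U_{+i}$), which is exactly the vanishing of the $(k,\ell)$-minor of $[a\,|\,U\one]$. Since $k,\ell$ were arbitrary, all such minors vanish, so $a$ is a scalar multiple of $U\one$.

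The main obstacle I anticipate is twofold. First, the diagonal of $P$ is forced to be zero, so several of the ``diagonal'' contributions that appeared harmlessly in the symmetric case are now simply absent — I must make sure the rank-two directions I write down genuinely have zero diagonal and genuinely lie in $T_P\cAV_r$, which constrains how I combine $v_i w_i^T$ with its transpose (the skew combination $vw^T - wv^T$, not the symmetric one). Second, verifying that the directional derivative evaluates to the clean expression $a_\ell U_{k+} - a_k U_{\ell+}$ requires carefully tracking which entries of $X$ are above versus below the diagonal and pairing each $x_{ij}$ ($i<j$) with the correct $u_{ij}/p_{ij}$; the cancellations that kill everything except the row-sum terms are the crux, and a sign error there would be fatal. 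Unlike in the rectangular and symmetric cases I would not expect the proportionality constant to simplify as nicely, since the upper-triangular entries of $P$ sum to $1$ while $U$ is symmetric; I would leave the constant as $a_+/U_{++}$ (or note it is nonzero by genericity) rather than pinning down a closed form, and postpone the precise normalization to wherever the dual matrix $Q$ is introduced.
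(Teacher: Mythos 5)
Your proposal is correct and follows essentially the same route as the paper: for each index pair, build the skew rank-two matrices $v w^T - w v^T$ from a column of $P$ and a scaled standard basis vector, combine them so the upper-triangular entries sum to zero (hence the combination lies in $T_P\cAV_r$), and read off the vanishing minor $a_k U_{\ell+}-a_\ell U_{k+}=0$ from criticality — exactly the paper's construction, done there for the pair $(1,2)$ with the remaining minors handled ``similarly.'' Your observation that $a$ is not $P\one$ but the vector of upper-triangular row sums, and your deferral of the proportionality constant (which the paper likewise only pins down later, via $\sum_i a_i = 2$, in the proof of Theorem~\ref{thm:Alternating}), are both consistent with the paper.
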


\begin{proof}
We need to show that $2 \times 2$-minors of the matrix $(a | U\one)$
are zero, and do so for the first minor. Let $v_1,v_2$ be the first
and second column of $P$, respectively, and set $w_1:=(a_2,0,\ldots,0)$
and $w_2:=(0,-a_1,0,\ldots,0)$. Then each of the matrices 
$v_i w_i^T-w_i v_i^T$ is tangent at $P$ to the variety of skew-symmetric 
rank-$r$ matrices, and their sum 
\[ 
X=
\begin{bmatrix} 
0 & (a_2-a_1) p_{12} & a_2 p_{13} & \cdots & a_2 p_{1m} \\
-(a_2-a_1) p_{12} & 0 & -a_1 p_{23} & \cdots & -a_1 p_{2m} \\
-a_2 p_{13} & a_1 p_{23} & 0 & \cdots & 0 \\
\vdots & \vdots & \vdots & & \vdots\\
-a_2 p_{1m} & a_1 p_{2m} & 0 & \cdots & 0
\end{bmatrix}
\]
has upper-triangular entries adding up to $0$, so that $X$ is tangent at
$P$ to $\cAV_r$. The derivative of $\ell_U$ at $P$ in the direction $X$, 
which is zero by criticality of $P$, equals 
\[ (a_2-a_1) u_{12} + a_2 u_{13} + \ldots + a_2 u_{1m} 
-a_1 u_{23} - \ldots - a_1 p_{2m} = a_2 u_{1+} - a_1 u_{2+}, \]
which is the minor whose vanishing was required.
\end{proof}

Next we determine rank-two elements spanning $T_P \cAV_r$.
For this we introduce the skew bilinear form $\la .,. \ra$ on $\CC^m$
defined by $\la v,w \ra=v^T S w=\sum_{i<j} (v_i w_j - v_j w_i)$, where
$S$ is the skew-symmetric matrix
\[ S=\begin{bmatrix}
0 & 1 & \cdots & 1\\
-1 & 0 & \ddots &\vdots\\
\vdots &\ddots &\ddots & 1 \\
-1 & \cdots & -1 & 0
\end{bmatrix} \]
from the introduction. 
By elementary linear algebra, this form is non-degenerate if $m$ is
even and has a one-dimensional radical spanned by $(1,-1,1,-1,\ldots,1)
\in \CC^m$ if $m$ is odd.

In what follows, it will be convenient to think of skew-symmetric matrices
also as elements of $\Wedge^2 \CC^m$ or as alternating tensors.

\begin{lm} \label{lm:TangentAlt}
The tangent space $T_P \cAV_r$ is spanned by skew-symmetric matrices of
the form $vw^T - wv^T$ with $v \in \im P$ and $\la v,w \ra=0$.
\end{lm}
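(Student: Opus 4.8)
The plan is to mirror the structure of the proof of Lemma~\ref{lm:TangentSym}, adapted to the exterior-algebra setting. First I would verify that every skew matrix of the form $vw^T-wv^T$ with $v\in\im P$ and $\la v,w\ra=0$ actually lies in $T_P\cAV_r$: the condition $v\in\im P$ forces $(vw^T-wv^T)\ker P\subseteq\im P$ (since for $u\in\ker P$ the image is a combination of $v$ and, if $w^Tu\neq 0$ fails, we need $w\perp\ker P$ — here I should be careful, as in Lemma~\ref{lm:Tangent} one really wants $v\in\im P$ \emph{or} $w\perp\ker P$; for spanning purposes it suffices to take $v\in\im P$ throughout), and the condition $\la v,w\ra = v^TSw = \sum_{i<j}(v_iw_j-v_jw_i)$ being zero is exactly the statement that the sum of the upper-triangular entries of $vw^T-wv^T$ vanishes. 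So these matrices sit inside the tangent space as described in the displayed formula for $T_P\cAV_r$.

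The core of the argument is the dimension count showing these rank-two matrices span all of $T_P\cAV_r$. Following the symmetric case, I would decompose $\CC^m=A\oplus B\oplus C$ where $A\oplus B=\im P$ and $A\oplus C$ is the hyperplane $(S\one)^\bot$ — or more precisely the orthogonal complement of $\one$ with respect to the form $\la\cdot,\cdot\ra$, i.e. $\{w : \la\one,w\ra=0\}$ — using the genericity assumption on $P$ that guarantees $\im P\not\subseteq\one^\bot$ so that $B$ is one-dimensional. Then $\dim A=r-1$, $\dim B=1$, $\dim C=m-r$ (in the non-degenerate even-$m$ case; I must track the odd-$m$ radical separately, but since $r$ is even and $P$ has rank exactly $r$, $\im P$ is nondegenerate and $\one$ should not lie in its radical for generic $U$). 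In $\Wedge^2\CC^m$ the decomposition of $\bigwedge^2(A\oplus B\oplus C)$ is $\bigwedge^2 A\oplus\bigwedge^2 C\oplus(A\wedge B)\oplus(A\wedge C)\oplus(B\wedge C)$, with $\bigwedge^2 B=0$. The span of the matrices $vw^T-wv^T$ with $v\in\im P=A\oplus B$ and $\la v,w\ra=0$ is $\bigwedge^2 A + (A\wedge B) + (A\oplus B)\wedge C$; its dimension is $\binom{r}{2}+(r-1)+r(m-r)$. I would then check this equals $\dim\cAV_r = \binom{r}{2}+r(m-r)-1$; note $\binom{r}{2}+(r-1)+r(m-r) = \binom{r}{2}+r+r(m-r)-1$, so this forces $\dim\cAV_r=\binom r2+r(m-r)+r-1$, which I should confirm against the known dimension of (this open piece of) the $r/2$-th secant variety of the Grassmannian intersected with a hyperplane.

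The main obstacle I anticipate is handling the skew bilinear form $\la\cdot,\cdot\ra$ cleanly when $m$ is odd, where it is degenerate with radical spanned by $(1,-1,1,\ldots,1)$: I must argue that for generic $P$ of rank exactly $r$ (even), neither $\im P$ meets this radical nontrivially nor does $\one$ cause a collapse, so that the decomposition $A\oplus B\oplus C$ with the stated dimensions still goes through. A secondary subtlety is that the complement of the span — which should be $\bigwedge^2 B\oplus\bigwedge^2 C=\bigwedge^2 C$ together with a correction coming from $B\wedge C$ — has the correct dimension $\binom{m-r}{2}+(m-r)$, matching the codimension of $\cAV_r$; I would verify $\binom m2 - \binom r2 - r(m-r) - (r-1) = \binom{m-r}{2}+(m-r)-1$ and reconcile the off-by-one carefully, since the hyperplane constraint $\sum_{i<j}p_{ij}=1$ is what produces the ``$-1$'' and the corresponding rank-one correction in the complement. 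Once the dimension bookkeeping checks out the spanning statement follows, exactly as in Lemmas~\ref{lm:Tangent} and~\ref{lm:TangentSym}.
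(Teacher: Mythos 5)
There is a genuine gap in the spanning argument, and it stems from transplanting the wrong structure from the symmetric case. You decompose $\CC^m=A\oplus B\oplus C$ with $A\oplus C$ an orthogonal complement of $\one$, but in the skew case the hyperplane constraint on a rank-two tensor $vw^T-wv^T$ is $\la v,w\ra=v^TSw=0$, a single bilinear (symplectic-orthogonality) condition coupling $v$ and $w$ --- not, as in the symmetric case, the product condition ``$v\perp\one$ or $w\perp\one$''. Consequently the admissible pairs $(v,w)$ are not described by membership of $v$ or $w$ in fixed hyperplanes, the claimed span $\Wedge^2 A+(A\wedge B)+(A\oplus B)\wedge C$ is not what these tensors generate, and your count $\binom{r}{2}+(r-1)+r(m-r)$ exceeds $\dim\cAV_r=\binom{r}{2}+r(m-r)-1$ by $r$, not by one; you flag the discrepancy but never resolve it, and no amount of ``careful reconciliation of the off-by-one'' will close a gap of size $r$. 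The paper instead decomposes $\CC^m=A\oplus C$ with $A=\im P$ and $C$ the $\la\cdot,\cdot\ra$-orthogonal complement of $A$ (using that $\im P$ is non-degenerate for the form, which also disposes of your odd-$m$ worry about the radical), obtains all of $A\otimes C$ immediately since $\la v,w\ra=0$ holds automatically for $v\in A$, $w\in C$, and then supplies the key step your proposal is missing: the tensors $vw^T-wv^T$ with $v,w\in A$ and $\la v,w\ra=0$ span exactly a codimension-one subspace of $\Wedge^2 A$. The paper proves this by noting that these tensors form a single orbit under $\Sp(A)=\Sp_r$, so their span is a submodule, and $\Wedge^2 A$ decomposes into the one-dimensional trivial module spanned by the form plus a single irreducible codimension-one module. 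That representation-theoretic fact is where the ``$-1$'' in $\dim\cAV_r$ actually comes from --- not from a hyperplane condition involving $\one$ --- and without some substitute for it the dimension bookkeeping cannot succeed.

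A smaller point: your hedging in the membership verification is unnecessary. For skew-symmetric $P$ one has $\im P=(\ker P)^\perp$, so $v\in\im P$ and $u\in\ker P$ give $v^Tu=0$ and hence $(vw^T-wv^T)u=v(w^Tu)\in\im P$ with no extra hypothesis on $w$.
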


In the proof we will use that $\im P$ is non-degenerate with respect to
$\la ., .\ra$. This condition will be satisfied for general $U$.

\begin{proof}
The proof is similar to the symmetric case and the rectangular case:
a skew-symmetric matrix $X$ lies in the tangent space if and only if
$X \ker P \subseteq \im P$ and $\sum_{i<j} x_{ij}=0$. The condition $v
\in \im P$ ensures the first property and the condition that $\la v,
w \ra=0$ ensures the second property.

To complete the proof, decompose $\CC^m$ as $A \oplus C$ with $A=\im P$
and $\la A,C \ra=0$, so that $\Wedge^2 \CC^m$ decomposes as $\Wedge^2
A \oplus (A \otimes C) \oplus \Wedge^2 C$. Taking the vector $w$ in
$v^Tw-wv^T$ from $C$ we see that $A \otimes C$ is contained in the span of
the matrices in the lemma. Next we argue that a codimension-one subspace
of $\Wedge^2 A$ is also contained in their span. Indeed, the (non-zero)
tensors $v^Tw - wv^T \in \Wedge^2 A$ with $v,w \in A$ perpendicular with
respect to $\la .,. \ra$ form a single orbit under the symplectic group
$\Sp(A)=\Sp_r$ (recall that $r$ is even, so that this is a reductive
group), and hence their span is an $\Sp(A)$-submodule of $\Wedge^2 A$.
But $\Wedge^2 A$ splits as a direct sum of only two irreducible modules
under $\Sp(A)$: a one-dimensional trivial module corresponding to (the
restriction of) $\la .,. \ra$ and a codimension-one module. Hence the
tensors $v^T w-w v^T$ must span that codimension-one module.

Summarizing, we find that the matrices in the lemma span a space of 
dimension $r(n-r) + \binom{r}{2} -1$, which equals $\dim \cAV_r$.
\end{proof}

Recall that in the alternating case the likelihood function is given by
$\ell_U(P)=\prod_{i<j} p_{ij}^{u_{ij}}$. The derivative of this expression
in the direction of a skew-symmetric matrix $X$ of the form $vw^T-wv^T$
equals (up to a factor irrelevant for its vanishing)
\[ \sum_{i<j} x_{ij} \frac{u_{ij}}{p_{ij}} 
= \sum_{i<j} \frac{u_{ij}}{p_{ij}} (v_iw_j - v_j w_i) 
= v^T 
\begin{bmatrix}
0 & \frac{u_{12}}{p_{12}} & \cdots & \frac{u_{1m}}{p_{1m}}\\
-\frac{u_{12}}{p_{12}} & 0 & \ddots & \vdots\\
\vdots & \ddots & \ddots & \frac{u_{m-1,m}}{p_{m-1,m}}\\
-\frac{u_{1m}}{p_{1m}} & \cdots & -\frac{u_{m-1,m}}{p_{m-1,m}} & 0 
\end{bmatrix}
w. \]
Define the skew matrix $Q$ by $P*Q=U$. Then criticality of $P$ translates
into $v^T Q w=0$ for all $v \in \im P$ and $w \in \CC^m$ with $\la
v,w \ra=0$.

\begin{thm}[ML-duality for skew matrices] \label{thm:Alternating}
Let $U=(u_{ij})_{ij}$ be a sufficiently general symmetric data
matrix with zeroes on the diagonal, and let $P$ be a critical point
of $\ell_U$ on $\cAV_r$, where $r \in \{2,\ldots,m\}$ is even. Let $s
\in \{0,\ldots,m-2\}$ be the largest even integer less than or equal
to $m-r$. Define the matrix $Q$ by $P * Q=U$.  Then the skew matrix
$Q':=2Q/U_{++}$ is a critical point of $\ell_U$ on the translated
determinantal variety $\cAV'_s$.  Moreover, the map $P \to Q'$ is
a bijection between the critical points of $\ell_U$ on $\cAV_r$ and
those $\cAV'_s$.
\end{thm}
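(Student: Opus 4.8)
The plan is to mirror the structure of the proofs of Theorems~\ref{thm:Rectangular} and~\ref{thm:Symmetric}, with the extra bookkeeping forced by the translation in $\cAV'_s$ and by the fact that the relevant bilinear form is the symplectic-type form $\la .,. \ra$ given by $S$, rather than the standard form. First I would record the normalization: exactly as in Lemmas~\ref{lm:NormalizationSym}, one decomposes the rank-one-ish skew matrix built from $U\one$ (more precisely the skew matrix whose $(i,j)$-entry for $i<j$ is $U_{i+}U_{j+}$ times the appropriate sign, or rather the skew tensor whose image lies in $\im P$ by Lemma~\ref{lm:SuffStatAlt}) against $P$ and uses criticality to show $\sum_{i<j} q_{ij} = c\, U_{++}$ for an explicit scalar $c$, yielding the factor $2/U_{++}$ in $Q' = 2Q/U_{++}$. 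This is routine and parallels the earlier two cases.

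The heart of the argument is the rank bound and the criticality transfer. Using Lemma~\ref{lm:TangentAlt}, criticality of $P$ says $v^T Q w = 0$ whenever $v \in \im P$ and $\la v,w\ra = v^T S w = 0$. Decompose $\CC^m = A \oplus C$ with $A = \im P$ and $\la A, C\ra = 0$, so $A$ is non-degenerate and $C = A^{\perp_S}$. Taking $w \in C$ arbitrary and $v \in A$ arbitrary gives $v^T Q w = 0$ for all such pairs, i.e. $\im Q \perp$ (ordinary transpose) to... here is where care is needed: I want to conclude $\ker Q \supseteq C$ and $\im Q \subseteq$ something of dimension $s$. Because $A$ is $\la .,.\ra$-non-degenerate of even dimension $r$, the form restricted to $C$ has radical equal to the radical of $S$ itself, which is $0$ if $m$ is even and one-dimensional if $m$ is odd; this is precisely why $s$ is ``the largest even integer $\le m-r$'' — when $m-r$ is odd the extra dimension coming from the radical of $\la .,.\ra|_C$ must be discarded. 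So the correct statement is that $Q - \lambda S$ (or rather $S - P'$ for the appropriately scaled $P'$) has rank at most $s$; in other words, the translate by $S$ in the definition of $\cAV'_s$ is forced by the fact that criticality controls $Q$ only \emph{modulo the $\la .,.\ra$-form}, i.e. modulo $S$. I would make this precise by writing $v^T Q w = 0$ for $v \in A$, $w \perp_S A$, observing $w \perp_S A \iff S w \in A^{\perp_{\mathrm{std}}} = (\im P)^{\perp_{\mathrm{std}}}$, and manipulating to get $\im Q \subseteq$ a subspace containing $\im S$ with quotient of dimension $r-1$, hence $\rk(Q - \lambda S) \le m - r$, and then the parity constraint (skewness forces even rank) drops it to $s$.

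Next, the reverse direction: show that $Q'$ (equivalently $S - $ the translate) satisfies the criticality condition on $\cAV'_s$. For this I would establish the analogue of the middle step in the rectangular proof: $x^T P y = 0$ for all $x, y$ with $x \in \im(S - Q')$ (or the appropriate space of dimension $\le s$) and $\la x, y\ra_{?} = 0$ with respect to whatever form cuts out the tangent space to $\cAV'_s$ at $Q'$. The tangent-space computation for $\cAV'_s$ itself needs to be done — since $\cAV'_s$ has no hyperplane constraint but does have the translate by $S$, its tangent space at $Q$ should be $\{X \text{ skew} \mid X\ker(S-Q) \subseteq \im(S-Q)\}$, spanned by matrices $vw^T - wv^T$ with $v \in \im(S-Q)$ and \emph{no} perpendicularity constraint (no summation condition), which actually makes this direction slightly easier than the $\cAV_r$ side. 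Then criticality of $Q'$ on $\cAV'_s$ follows from $\im(S - Q') $ and $\ker(S-Q')$ being in the right subspaces, just as in the earlier proofs.

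Finally, I would close the loop with the same involution argument as at the end of Theorem~\ref{thm:Rectangular}: the construction $P \mapsto Q'$ via $P * Q = U$, $Q' = 2Q/U_{++}$ is visibly involutive (recovering $P$ from $Q'$ by the same formula up to the scalar), giving a rational map $\Crit(\cAV_r) \dashrightarrow \Crit(\cAV'_{f(r)})$ that commutes with projection to $U$, hence is dominant and birational, hence $f$ is a bijection on the relevant set of even ranks with $f(r) \le s$; a counting/minimality argument then forces $f(r) = s$ for generic $U$. The main obstacle, I expect, is getting the bilinear-form bookkeeping exactly right in the rank bound — specifically pinning down why the translate is by $S$ and not by some scalar multiple or other skew matrix, and why the parity floor ``largest even $s \le m-r$'' is the correct answer rather than $m-r$ itself; this requires carefully tracking the radical of $\la .,. \ra$ restricted to the $S$-orthogonal complement of $\im P$ when $m$ is odd, and separately handling the even-$m$ case where the form is non-degenerate but the target rank $m-r$ might already be even or odd. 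Everything else is a faithful transcription of the two preceding proofs.
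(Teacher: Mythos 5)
Your overall architecture---a rank bound via the translate by $S$, a criticality transfer, then the birational/involution count---matches the paper's, but two steps as you describe them would not go through. First, the scalar in front of $S$. Your opening paragraph derives the factor $2/U_{++}$ from a sum-of-entries normalization in the style of Lemma~\ref{lm:NormalizationSym}; but $\cAV'_s$ carries no sum constraint, and no such normalization lemma is needed or true here. The factor arises instead as the proportionality constant between two bilinear forms: for $v\in\im P$ the functional $w\mapsto v^TQw$ vanishes on the kernel of $w\mapsto v^TSw$, hence the form $(v,w)\mapsto v^TQw$ on $\im P\times\CC^m$ is a single scalar $\lambda$ times $(v,w)\mapsto v^TSw$, and one pins down $\lambda=U_{++}/2$ by evaluating both forms on one explicit pair (the first column of $P$ against $e_1$) and invoking Lemma~\ref{lm:SuffStatAlt} together with $\sum_i a_i=2$. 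This yields $\im P\subseteq\ker\bigl(S-\tfrac{2}{U_{++}}Q\bigr)$, hence $\rk B\le m-r$, and then $\rk B\le s$ simply because a skew matrix has even rank. Your proposed route---``$\im Q\subseteq$ a subspace containing $\im S$ with quotient of dimension $r-1$, hence $\rk(Q-\lambda S)\le m-r$''---does not follow: a codimension-$(r-1)$ constraint on $\im Q$ only bounds $\rk Q$ by $m-r+1$ and says nothing about $Q-\lambda S$ for an undetermined $\lambda$; you must first identify $\lambda$ and only then bound the kernel of the difference. (Your parity discussion via the radical of $\la .,.\ra$ restricted to the complement of $\im P$ is workable but is an unnecessarily roundabout substitute for the even-rank observation.)

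Second, the map is not ``visibly'' involutive, because the two varieties are cut out asymmetrically: $\cAV_r$ is a hyperplane section ($\sum_{i<j}p_{ij}=1$) while $\cAV'_s$ is an affine translate with no sum constraint. To get the reverse rational map $\Crit(\cAV'_k)\dto\Crit(\cAV_{g(k)})$ that the counting argument requires, you must show that the $P$ recovered from a critical $Q'$ on $\cAV'_k$ actually satisfies $\sum_{i<j}p_{ij}=1$; this is not automatic from the formula $P*Q=U$. The paper extracts it by observing that $B=S-Q'$ itself lies in $T_{Q'}\cAV'_k$, so criticality applied in the direction $B$ gives $\sum_{i<j}B_{ij}U_{ij}/Q_{ij}=0$, which expands to exactly $\sum_{i<j}P_{ij}=1$. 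Without this step the identity $g(f(r))=r$, and hence the bijectivity of $f$ and the conclusion $f(r)=s$, is unsupported. The remaining ingredients of your plan (the tangent space to $\cAV'_k$ being spanned by $vw^T-wv^T$ with $v\in\im B$ and no further constraint, and the closing dominance/injectivity argument) do agree with the paper.
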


As in the rectangular and symmetric cases, the bijection $P \to Q'$ maps
real, positive critical points to real, positive critical points in such
a way that the sum of the log-likelihoods of $P$ and $Q'$ is constant.

\begin{proof}
By construction of $Q$ we have $v^T Q w=0$ for all $v \in \im P$ and $w
\in \CC^m$ with $v^T S w=0$. This means that the quadratic form $(v,w)
\mapsto v^T Q w$ on $\im P \times \CC^m$ is a scalar multiple of the
quadratic form $(v,w) \mapsto v^T S w$, denoted $\la .,.\ra$
earlier,
on that same space. The scalar is computed by computing
\[ (0,-p_{12},\ldots,-p_{1m}) Q (1,0,\ldots,0)^T = U_{1+} \]
and 
\[ (0,-p_{12},\ldots,-p_{1m}) S (1,0,\ldots,0)^T = P_{1+} =
a_1, \]
where $a$ is the vector of Lemma~\ref{lm:SuffStatAlt}. Using that lemma
and the fact that $\sum_i a_i = 2$ we find that $a_1=2 U_{1+}/U_{++}$. We
conclude that the skew bilinear form associated to $B:=S-\frac{2}{U_{++}}
Q$ is identically zero on $\im P \times \CC^m$, hence $\ker B$ contains
$\im P$ and $\im B=(\ker B)^\bot$ (where $\bot$ refers to the standard
bilinear form on $\CC^m$) is contained in $\ker P=(\im P)^\bot$. In
particular, $B$ has rank at most $s$; let $k \leq s$ denote the actual rank
of $B$.

Next we argue that $Q':=\frac{2}{U_{++}}Q$ is critical for $\ell_U$
on $\cAV'_k$. By arguments similar to (but easier than) those in
Lemma~\ref{lm:TangentAlt} the tangent space $T_{Q'} \cAV'_k$ is spanned
by rank-two matrices $vw^T - wv^T$ with $v \in \im B$ and $w \in \CC^m$
arbitrary. Thus proving that $Q'$ is critical boils down to proving
that $v^T P w=0$ for all $v \in \im B$ and $w \in \CC^m$. But this is
immediate from $\im B \subseteq \ker P$. Thus $Q'$ is critical.

Furthermore, we need to show that (for generic $U$) the rank $k$ of
$B=S-Q'$ is equal to $s$ rather than strictly smaller, and that the map $P
\mapsto Q'$, which is clearly injective, is also surjective on the set of
critical points for $\ell_U$ on $\cAV'_s$. For these purposes we reverse
the arguments above: assume that $Q'$ is a critical point on $\cAV'_k$,
where $k$ is an even integer in the range $\{0,\ldots,m-2\}$. Define
$Q:=\frac{U_{++}}{2} Q'$ and define $P$ by $P*Q=U$. Also, define
$B:=S-Q'$.  Then criticality of $Q'$ implies that $v^T P w=0$ for all
$v \in \im B$ and $w \in \CC^m$, and this implies that $\ker P \supseteq
\im B$. Thus $l:=\rk P$ is at most $m-k$.

Moreover, $B$ itself lies in the tangent space $T_{Q'}
\cAV'_k$, and criticality of $Q'$ implies that $\sum_{i<j} B_{ij}
\frac{U_{ij}}{Q_{ij}}=0$.  Substituting the expression for $B$ into this
we find that
\[ 0 = \sum_{i<j} (1-\frac{2}{U_{++}}Q_{ij})
\frac{U_{ij}}{Q_{ij}} = \sum_{i<j}(P_{ij} -
\frac{2}{U_{++}}) = (\sum_{i<j} P_{ij}) - 1, \]
i.e., the upper-triangular entries of $P$ add up to one. We
conclude that $P$ lies in $\cAV_l$. 
Next, we argue that $P$ is critical. Indeed, for $v \in
\im P$ and $w \in \CC^m$ such that $\la v, w \ra=(v^TSw=)0$ we find 
\[ v^T Q w = v^T(\frac{U_{++}}{2}(S-B))w
=\frac{U_{++}}{2}(v^T S w - v^T B w)=0+0=0, \]
where we have used that $\im P \subseteq \ker B$.

Summarizing, we have found rational maps
\begin{align*} 
\psi_r:\Crit(\cAV_r) &\dto \Crit(\cAV'_{f(r)}), &
(P,U) & \mapsto (\frac{2}{U_{++}} \cdot
\frac{U}{P},U)=(Q',U) \text{ and}\\
\psi'_k:\Crit(\cAV'_k) &\dto \Crit(\cAV_{g(k)}), &
\quad (Q',U) & \mapsto (\frac{2}{U_{++}} \cdot \frac{U}{Q'},U) 
\end{align*}
for some map $f$ mapping even integers $r \in \{2,\ldots,m\}$ to even
integers $k \in \{0,\ldots,m-2\}$, and some map $g$ in the opposite
direction.  By the argument in the proof of Theorem~\ref{thm:Rectangular},
both $\psi_r$ and $\psi'_k$ are birational and $g(f(r))=r$. Hence $f$
is a bijection, and by the above it satisfies $f(r) \leq m-r$. The only such
bijection is the one that maps $r$ to the largest even integer less than
or equal to $m-r$.  This concludes the proof of the theorem.  \end{proof}

\begin{example}
Now we give an explicit example illustrating dual solutions
in the alternating case. For $m=4$  the ML-degree of ${\cAV}_{2}$
is $4$ \cite{Hosten05}. Setting
\[
U=\frac{1}{41}\left[\begin{array}{cccc}
0 & 2 & 3 & 5\\
2 & 0 & 7 & 11\\
3 & 7 & 0 & 13\\
5 & 11 & 13 & 0
\end{array}\right]
\,\text{ and }\, P=\left[\begin{array}{cccc}
0 & 0.0386 & 0.0978 & 0.1075\\
-0.0386 & 0 & 0.1563 & 0.2929\\
-0.0978 & -0.1563 & 0 & 0.3069\\
-0.1075 & -0.2929 & -0.3069 & 0
\end{array}\right],
\]
we have that $P$ is a critical point of $\ell_{U}$ on ${\cAV}_{2}$ and $U_{++}=2$.
Having $Q$ defined as $P*Q=U$, we find that $Q(=Q')$ has full rank. But in
the alternating case the ML-dual variety is an affine
translate of a determinantal variety.  We find that $B=S-Q$ equals
\[
B=\left[\begin{array}{cccc}
0 & -0.2638 & 0.2518 & -0.1344\\
0.2638 & 0 & -0.0924 & 0.0841\\
-0.2518 & 0.0924 & 0 & -0.0332\\
0.1344 & -0.0841 & 0.0332 & 0
\end{array}\right],
\]
and indeed $B$ has rank $4-2=2$. We can actually compute
the ML-degree of $\cAV_2'$ symbolically to be $4$ (even
with the $u_{ij}$ treated as symbols). For the data matrix
$U$ above, the minimal polynomial for $q_{34}$ equals
$434217q_{34}^4-1335767q_{34}^3+1536717q_{34}^2-764049q_{34}+127426$.
\end{example}

\section{Conclusion}

We have proved that a number of natural determinantal varieties of
matrices are {\em ML-dual} to other such varieties living in the
same ambient spaces. However, we have done so without formalizing
what exactly we mean by ML-duality. It would be interesting to find a
satisfactory general definition, perhaps involving the condition that
$(P,U) \mapsto (\frac{U}{P},U)$, or some variant of this
that takes marginals
into account, is a birational map between the two varieties of critical
points. Given such a definition, it would be great to discover new ML-dual
pairs of varieties, for instance so-called {\em subspace varieties}
\cite{Landsberg06} or varieties of consisting of {\em tensors} of given
(border) rank.
Lastly, we note that the problem of finding a formula
for ML-degrees of matrix models remains wide open,
though ML-duality has essentially cut this problem in half.



\end{document}